\documentclass[12pt,a4paper]{article}

\usepackage[T1]{fontenc}
\usepackage[utf8]{inputenc}
\usepackage{geometry}
\usepackage{amsmath,amssymb,amsthm,mathtools}
\usepackage{bm}
\usepackage[ruled,vlined,linesnumbered]{algorithm2e}
\usepackage[dvipsnames,svgnames]{xcolor}
\usepackage{graphicx}
\usepackage{booktabs}
\usepackage{float}
\usepackage{placeins}
\usepackage{array}
\usepackage{tabularx}
\usepackage{multirow}
\usepackage{siunitx}
\usepackage{cite}
\usepackage{hyperref}
\hypersetup{
  colorlinks=true,
  linkcolor=MidnightBlue,
  citecolor=MidnightBlue,
  urlcolor=MidnightBlue,
  pdftitle={A Projection-Free Algorithm for Variational Inequalities in Hilbert Spaces with Strong Convergence},
  pdfauthor={R. Diaz Millan}
}
\theoremstyle{plain}
\newtheorem{theorem}{Theorem}[section]
\newtheorem{lemma}{Lemma}[section]
\newtheorem{proposition}{Proposition}[section]
\newtheorem{corollary}{Corollary}[section]

\theoremstyle{definition}
\newtheorem{definition}{Definition}[section]
\newtheorem{remark}{Remark}[section]
\newtheorem{assumption}{Assumption}

\usepackage[nameinlink,capitalise]{cleveref}
\crefname{assumption}{Assumption}{Assumptions}
\Crefname{assumption}{Assumption}{Assumptions}
\crefname{remark}{Remark}{Remarks}
\Crefname{remark}{Remark}{Remarks}
\crefname{proposition}{Proposition}{Propositions}
\Crefname{proposition}{Proposition}{Propositions}
\crefname{lemma}{Lemma}{Lemmas}
\Crefname{lemma}{Lemma}{Lemmas}
\crefname{theorem}{Theorem}{Theorems}
\Crefname{theorem}{Theorem}{Theorems}

\newcommand{\R}{\mathbb{R}}
\newcommand{\N}{\mathbb{N}}
\newcommand{\Hi}{\mathcal{H}}
\newcommand{\ip}[2]{\langle #1,\, #2 \rangle}
\newcommand{\norm}[1]{\left\| #1 \right\|}
\newcommand{\dist}{\operatorname{dist}}

\newcommand{\Gph}{\operatorname{Gph}}
\newcommand{\dom}{\operatorname{dom}}

\newcommand{\argmax}{\operatorname*{arg\,max}}

\newcommand{\NC}{N_C}
\newcommand{\weak}{\rightharpoonup}
\newcommand{\pos}[1]{\left[#1\right]_+}

\title{\textbf{A Projection-Free Algorithm for Variational Inequalities in Hilbert Spaces with Strong Convergence}}

\author{R.\ D\'{i}az Mill\'{a}n\\[4pt]
  \small Deakin University, Waurn Ponds, Victoria, Australia\\
  \small \texttt{r.diazmillan@deakin.edu.au}}

\date{\today}

\begin{document}
\maketitle

\begin{abstract}
We study variational inequalities governed by a point-to-set maximal monotone
operator in a real Hilbert space and constrained by a convex inequality
\(C=\{x\in\Hi:c(x)\le0\}\), where the defining function \(c\) is continuous
and not necessarily differentiable.  The proposed method uses only projections
onto intersections of half-spaces and avoids the metric projection onto
\(C\).  Feasibility is handled by subgradient cuts and, when a trial operator
point is infeasible, by a Slater correction based on a fixed strictly feasible
point.  The variational inequality is represented by Minty-type separating
half-spaces generated at feasible graph points of the operator, and a Haugazeau
half-space is added to obtain best-approximation convergence.  Under a
Slater-corrected feasible-separation condition, together with explicit exact,
approximate and finite-candidate oracle realisations, the whole sequence
converges strongly to \(P_{S^*}(x^0)\), the projection of the initial point onto
the solution set.  We also derive best-iterate \(O(N^{-1/2})\) residual
estimates for the step residual, feasibility violation and Minty gap.  The
analysis is stated directly for point-to-set maximal monotone operators, while
the concrete oracle realisations include finite-dimensional single-valued
models.  We record the consequences of strong monotonicity in the point-to-set
setting and provide numerical comparisons on nonsmooth and large-scale
constraints, including maxima of convex quadratics, a discretised optimal-control
problem, mixed-norm sparse recovery, a Cournot--Nash capacity equilibrium, and a
genuine point-to-set \(\ell_1\)-subdifferential example.
\end{abstract}

\noindent\textbf{Keywords:} variational inequality; maximal monotone operator;
Haugazeau method; Minty variational inequality; separating hyperplane; convex
inequality constraint; projection method.

\medskip
\noindent\textbf{MSC 2020:} 47H05; 47J20; 49J40; 65K15; 90C25.


\section{Introduction}

Variational inequalities provide a common framework for optimality conditions,
equilibrium models, constrained inclusions, and complementarity problems.  In
this paper we consider the variational inequality
\begin{equation}\label{eq:VIP}
  \text{find } x^*\in C \text{ and } u^*\in T(x^*) \text{ such that }
  \ip{u^*}{x-x^*}\ge0 \qquad \forall x\in C,
\end{equation}
where \(\Hi\) is a real Hilbert space, \(T:\Hi\rightrightarrows\Hi\) is a
point-to-set maximal monotone operator, and the feasible set is given by a
convex inequality
\begin{equation}\label{eq:Cdef}
  C:=\{x\in\Hi: c(x)\le0\},
\end{equation}
with \(c:\Hi\to\R\) convex, continuous, and not necessarily differentiable.
The solution set of \eqref{eq:VIP} is denoted by \(S^*\), and throughout we
assume that \(S^*\ne\emptyset\).  The formulation includes the classical
single-valued case, but the point-to-set setting is natural for variational
inequalities generated by subdifferentials, normal cones, and maximal monotone
operators.  It also covers constrained convex minimisation, saddle-point and
monotone-inclusion models, and genuine variational inequalities not reducible to
minimisation, such as the equilibrium and optimal-control examples considered
below.

The point-to-set formulation is not merely a change of notation.  Variational
inequalities governed by multivalued operators have been studied under
monotonicity, pseudomonotonicity, generalised monotonicity, and point-to-set
continuity assumptions; see, for instance,
\cite{AnhKuno2012,AnhMuuStrodiot2009,AnhThangThach2021,BaoKhanh2005,BurachikDiazMillan2020,BurachikIusem2008,DongLuYangHe2017,FangHe2013}.
Several of these works develop projection, cutting-hyperplane, or
projection--contraction schemes for multivalued variational inequalities, and
some are formulated directly in Hilbert spaces.  The present paper is closest in
spirit to this line of work, but differs in two essential respects: the operator
half-space is required to be generated at a feasible graph point, and strong
convergence is obtained by combining this feasible Minty separation with the
best-approximation half-space relative to the initial point.

Projection-type algorithms are among the standard tools for solving
\eqref{eq:VIP}; in the optimisation case they are closely related to projected
and subgradient projection methods for nonsmooth convex problems
\cite{Alber1998,NedicBertsekas2001,Polyak1969}.  Their convergence analysis is
well developed, but their practical efficiency can depend strongly on the
ability to compute the metric projection \(P_C\).  When \(C\) is a simple box, ball, affine subspace, or
simplex, this projection is usually inexpensive.  In contrast, if \(C\) is
represented by a nonsmooth convex inequality, or by the maximum of many convex
functions, computing \(P_C\) may itself require the solution of a nontrivial
convex optimisation problem.  This situation appears, for instance, in robust
quadratic constraints, discretised state-constrained control problems, and
shared-capacity equilibrium models.  It is therefore natural to ask whether one
can retain the stability of projection methods while avoiding metric
projections onto \(C\).

Several lines of work address this difficulty.  Relaxed-projection and
outer-approximation methods replace difficult feasible sets by separating
half-spaces; see, for example, Fukushima's relaxed projection method
\cite{Fukushima1986}.  The projection method of Solodov and Svaiter
\cite{SolodovSvaiter1999} uses separating hyperplanes for monotone variational
inequalities.  The subgradient extragradient method of Censor, Gibali and
Reich \cite{CensorGibaliReich2011} replaces one projection onto \(C\) by a
projection onto a subgradient half-space.  Related relaxed-projection splitting
methods for variational inequalities with convex inequality constraints were
studied in \cite{BelloCruzDiazMillan2016}.  Other splitting and penalty
approaches for constrained variational inequalities and monotone inclusions
include forward--backward penalty schemes, Douglas--Rachford-type primal--dual
splittings, and forward--backward variants with line-search mechanisms
\cite{AttouchCzarneckiPeypouquet2011,BelloCruzDiazMillan2015,BotHendrich2013}.
These methods show that separating half-spaces and splitting ideas can
substantially reduce projection costs, but strong convergence in Hilbert spaces
and the treatment of point-to-set monotone operators require care,
particularly when the separating half-space associated with the operator is not
generated at a feasible point.

The latter point is a central issue in the present paper.  If
\((y,v)\in\Gph(T)\) and \(y\in C\), then the half-space
\(\{x\in\Hi:\ip{v}{x-y}\le0\}\) contains \(S^*\).  This follows from the
variational inequality at \(y\) and the monotonicity of \(T\).  The conclusion
may fail if \(y\notin C\).  Thus, in an algorithm based on operator-generated
half-spaces, approximate feasibility of \(y\) is not enough to obtain an exact
Fejer-type containment argument.  To avoid this obstruction, we combine a
feasible Minty separation step with a simple Slater correction.  Given a fixed
strictly feasible point \(s\) satisfying \(c(s)<0\), any trial point
\(\widetilde y\) can be moved onto the feasible side by the convex combination
\(y=(1-\lambda)\widetilde y+\lambda s\), where \(\lambda\in[0,1)\) is chosen
explicitly from the values \(c(\widetilde y)\) and \(c(s)\).  This produces a
feasible point without computing \(P_C\).

FSPA has the following structure.  At iteration \(k\) we
construct three half-spaces.  The first is a subgradient half-space \(C_k\)
containing \(C\).  The second is a Minty half-space \(H_k\), generated at a
feasible graph point \((y^k,v^k)\in\Gph(T)\) with \(y^k\in C\), and containing
\(S^*\).  The third is a Haugazeau half-space \(W_k\), which enforces a
best-approximation geometry relative to the initial point \(x^0\), in the
spirit of Haugazeau's best-approximation construction
\cite{Haugazeau1968}; see also the projection-method perspective in
\cite{Combettes1997}.
The new iterate is defined by
\begin{equation}\label{eq:mainupdateintro}
  x^{k+1}=P_{C_k\cap H_k\cap W_k}(x^0).
\end{equation}
Thus the method uses metric projections only onto intersections of explicitly
available half-spaces.  In finite dimensions this is a small quadratic problem
with few linear constraints, and in Hilbert spaces it is the natural
Haugazeau-type projection step.

Our main theoretical result proves that, under a feasible Minty-separation
condition, the whole sequence generated by \eqref{eq:mainupdateintro}
converges strongly to \(P_{S^*}(x^0)\).  The proof is based on three elementary
ingredients: containment of \(S^*\) in the three half-spaces, a Haugazeau
boundedness and asymptotic-regularity estimate, and a Minty gap argument.  The
last ingredient is important: instead of passing to the limit in products of
weakly convergent sequences, we use the weak lower semicontinuity of the Minty
gap, viewed as the supremum of weakly continuous affine functions.  This gives
a clean strong-convergence proof in Hilbert spaces.  Strong convergence in
Hilbert spaces is often obtained by adding an anchoring or hybrid geometry to a
weakly convergent method; related ideas appear in strongly convergent methods
for nonsmooth convex minimisation and in Halpern/viscosity-type fixed-point
schemes \cite{BelloCruzIusem2011,Wittmann1992,Xu2002}.

The contributions of the paper are as follows.
\begin{enumerate}
  \item We propose a feasible-set projection-free algorithm for monotone
  variational inequalities over nonsmooth convex inequality constraints.  The
  method avoids the metric projection onto \(C\), while retaining projections
  onto explicitly constructed half-spaces.
  \item We introduce a Slater correction that guarantees that the operator
  half-space is generated at a feasible point.  This resolves the containment
  difficulty that arises when operator cuts are formed at merely approximately
  feasible points.
  \item We prove strong convergence of the full sequence to
  \(P_{S^*}(x^0)\) for point-to-set maximal monotone operators.
  \item We obtain best-iterate \(O(N^{-1/2})\) residual estimates for the step
  residual, the feasibility violation and the Minty gap, including a tail
  estimate for inexact oracle errors.
  \item We give exact, approximate, finite-candidate, and affine monotone
  realisations of the Minty oracle, making the oracle condition verifiable in
  concrete finite-dimensional settings.
  \item We report numerical experiments on max-quadratic constraints,
  discretised optimal control, mixed-norm sparse recovery, a Cournot--Nash
  capacity model, and a point-to-set \(\ell_1\)-subdifferential example.  The
  experiments are intended to illustrate the situations in which avoiding
  \(P_C\) is computationally advantageous, as well as cases where a carefully
  tuned relaxed baseline is competitive.
\end{enumerate}

The remainder of the paper is organised as follows.  \Cref{sec:prelim}
collects projection facts and the Minty representation of the solution set.
\Cref{sec:algorithm} states the Slater-corrected half-space algorithm and the
oracle assumption.  \Cref{sec:convergence} proves strong convergence.
\Cref{sec:strongmonotone} records consequences of strong monotonicity for
point-to-set operators.  \Cref{sec:oracle} discusses oracle realisations.
\Cref{sec:positioning} compares the method with related projection schemes, and
\Cref{sec:numerics} presents the numerical experiments.

\section{Preliminaries}\label{sec:prelim}

Throughout, \(\Hi\) is a real Hilbert space with inner product
\(\ip{\cdot}{\cdot}\) and norm \(\norm{\cdot}\).  If \(S\subseteq \Hi\) is
nonempty, closed and convex, then \(P_S\) denotes the metric projection onto
\(S\).  The subdifferential of a proper convex function \(f\) at \(x\) is
\(\partial f(x)\).  We use standard facts on projections and monotone operators;
see, for example, \cite{BauschkeCombettes2011,Zarantonello1971}.

\begin{lemma}\label{lem:proj}
Let \(S\subseteq \Hi\) be nonempty, closed and convex.  Then, for every
\(p\in\Hi\), \(x=P_S(p)\) if and only if
\begin{equation}\label{eq:projineq}
  \ip{p-x}{z-x}\le0\qquad \forall z\in S.
\end{equation}
Moreover, if \(x=P_S(p)\), then
\begin{equation}\label{eq:pydist}
  \norm{x-z}^2\le \norm{p-z}^2-\norm{p-x}^2
  \qquad \forall z\in S.
\end{equation}
\end{lemma}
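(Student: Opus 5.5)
The plan is to prove the variational characterisation first, using existence and uniqueness of the metric projection. Then derive the Pythagorean-type inequality from it by expanding a square.

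\emph{Existence and uniqueness of \(P_S(p)\).} Set \(d=\inf_{z\in S}\norm{p-z}\) and take a minimising sequence \((z_n)\subset S\). Apply the parallelogram law to \(p-z_n\) and \(p-z_m\). Since \(\tfrac12(z_n+z_m)\in S\) by convexity, this gives
\[
\norm{z_n-z_m}^2 \le 2\norm{p-z_n}^2+2\norm{p-z_m}^2-4d^2 .
\]
The right-hand side tends to \(0\), so \((z_n)\) is Cauchy. By completeness and closedness it converges to some \(x\in S\) with \(\norm{p-x}=d\). The same inequality, applied to two minimisers, shows they coincide. This step is standard; I can also simply cite \cite{BauschkeCombettes2011}.

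\emph{Characterisation \eqref{eq:projineq}.}
\begin{itemize}
\item[\(\Rightarrow\)] Suppose \(x=P_S(p)\). Fix \(z\in S\) and \(t\in(0,1]\). Then \(x+t(z-x)\in S\), so
\[
\norm{p-x}^2\le\norm{p-x-t(z-x)}^2=\norm{p-x}^2-2t\ip{p-x}{z-x}+t^2\norm{z-x}^2 .
\]
Cancel, divide by \(t\), and let \(t\downarrow0\) to get \(\ip{p-x}{z-x}\le0\).
\item[\(\Leftarrow\)] Suppose \(x\in S\) satisfies \eqref{eq:projineq}. For any \(z\in S\),
\[
\norm{p-z}^2=\norm{(p-x)-(z-x)}^2=\norm{p-x}^2-2\ip{p-x}{z-x}+\norm{z-x}^2\ge\norm{p-x}^2 .
\]
So \(x\) is a minimiser, and by uniqueness \(x=P_S(p)\).
\end{itemize}
Here I read the statement as implicitly requiring \(x\in S\).

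\emph{Inequality \eqref{eq:pydist}.} This reuses the same expansion. For \(z\in S\) and \(x=P_S(p)\),
\[
\norm{p-z}^2=\norm{p-x}^2+\norm{x-z}^2+2\ip{p-x}{x-z} .
\]
By \eqref{eq:projineq} the cross term \(2\ip{p-x}{x-z}=-2\ip{p-x}{z-x}\) is nonnegative. Dropping it gives \(\norm{x-z}^2\le\norm{p-z}^2-\norm{p-x}^2\).

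The only step needing real care is the existence argument. It is the one place where completeness of \(\Hi\) and closedness of \(S\) are used. Everything else is algebra together with the one-sided derivative limit in the forward implication.
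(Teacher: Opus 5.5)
Your proof is correct. The paper gives no proof of this lemma; it simply defers to the standard references on projections (Bauschke--Combettes, Zarantonello), and your argument is essentially the standard textbook one: parallelogram-law existence, the one-sided directional derivative for the forward implication, and expansion of the square for both the converse and \eqref{eq:pydist}.

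Your reading that the converse implicitly requires \(x\in S\) is also right and necessary. For \(S=\{0\}\subset\R\), \(p=2\), \(x=1\), one has \(\ip{p-x}{0-x}=-1\le0\), yet \(x\neq P_S(p)=0\).
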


\begin{lemma}\label{lem:haugazeau}
Let \(S\subseteq \Hi\) be nonempty, closed and convex and let
\(\bar x=P_S(x^0)\).  Suppose that
\begin{equation}\label{eq:Wcontain}
  S\subseteq W(x):=\{z\in\Hi:\ip{z-x}{x^0-x}\le0\}.
\end{equation}
Then
\begin{equation}\label{eq:hball}
  x\in B\!\left[\frac{x^0+\bar x}{2},\frac{\norm{x^0-\bar x}}{2}\right].
\end{equation}
In particular, \(\norm{x-x^0}\le \norm{x^0-\bar x}\).
\end{lemma}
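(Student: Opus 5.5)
Since \(\bar x\in S\subseteq W(x)\), taking \(z=\bar x\) in \eqref{eq:Wcontain} gives
\[
  \ip{\bar x-x}{x^0-x}\le0 .
\]
The whole proof rests on this single inequality; the rest is algebra.

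The key step is the polarization identity
\[
  \ip{\bar x-x}{x^0-x}=\norm{x-m}^2-\frac{\norm{x^0-\bar x}^2}{4},
  \qquad m:=\frac{x^0+\bar x}{2}.
\]
To check it, write \(\bar x-x=(m-x)+d\) and \(x^0-x=(m-x)-d\) with \(d:=(\bar x-x^0)/2\). Expanding, the cross terms cancel, leaving \(\norm{m-x}^2-\norm{d}^2\). Combining this identity with the inequality above yields \(\norm{x-m}\le\norm{x^0-\bar x}/2\), which is exactly \eqref{eq:hball}.

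For the consequence, apply the triangle inequality:
\[
  \norm{x-x^0}\le\norm{x-m}+\norm{m-x^0}\le\frac{\norm{x^0-\bar x}}{2}+\frac{\norm{x^0-\bar x}}{2}=\norm{x^0-\bar x}.
\]

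There is no real obstacle here. The only thing to get right is the sign bookkeeping in the identity. Note also that the projection characterization of \(\bar x\) from \cref{lem:proj} is not needed: we only use \(\bar x\in S\). Closedness and convexity of \(S\) serve only to make \(\bar x\) well defined.
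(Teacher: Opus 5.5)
Your proof is correct and follows essentially the same route as the paper's: you test $W(x)$ at $z=\bar x$ and expand $\ip{\bar x-x}{x^0-x}\le0$ into the ball inequality. Your explicit polarization identity and triangle-inequality step simply fill in the algebra that the paper summarises as ``expanding'' and ``follows directly.''
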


\begin{proof}
Since \(\bar x\in S\subseteq W(x)\), we have
\(\ip{\bar x-x}{x^0-x}\le0\).  Expanding this inequality gives
\begin{equation*}
  \left\|x-\frac{x^0+\bar x}{2}\right\|^2
  \le \frac{1}{4}\norm{x^0-\bar x}^2,
\end{equation*}
which is \eqref{eq:hball}.  The final inequality follows directly from
\eqref{eq:hball}: the centre of the ball is \((x^0+\bar x)/2\) and its radius
is \(\norm{x^0-\bar x}/2\).
\end{proof}

\begin{lemma}\label{lem:kadecklee}
Let \(z^k\) be a sequence in a Hilbert space \(\Hi\) and let \(z\in\Hi\).
If \(z^k\weak z\) and \(\norm{z^k}\to\norm{z}\), then \(z^k\to z\)
strongly.
\end{lemma}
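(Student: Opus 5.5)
The plan is to expand the squared distance \(\norm{z^k-z}^2\) using the inner product and then pass to the limit term by term. Concretely, for every \(k\) we have the identity
\begin{equation*}
  \norm{z^k-z}^2=\norm{z^k}^2-2\ip{z^k}{z}+\norm{z}^2 .
\end{equation*}
The middle term converges by weak convergence: \(z^k\weak z\) means \(\ip{z^k}{w}\to\ip{z}{w}\) for every fixed \(w\in\Hi\). Taking \(w=z\) gives \(\ip{z^k}{z}\to\norm{z}^2\).

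The first term is handled by the norm hypothesis. From \(\norm{z^k}\to\norm{z}\) and continuity of \(t\mapsto t^2\) we get \(\norm{z^k}^2\to\norm{z}^2\).

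Combining the three limits, the right-hand side tends to \(\norm{z}^2-2\norm{z}^2+\norm{z}^2=0\). Hence \(\norm{z^k-z}\to0\), which is strong convergence.

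There is no real obstacle here. The only point to check is that the weak limit is tested against the fixed vector \(z\) itself. This is legitimate because \(w\mapsto\ip{w}{z}\) is a bounded linear functional, and by the Riesz representation every such functional arises in this way in a Hilbert space. No uniform-convexity machinery beyond this inner-product expansion is needed.
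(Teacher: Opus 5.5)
Your proof is correct and is essentially the paper's argument. You expand \(\norm{z^k-z}^2\), test weak convergence against the fixed vector \(z\) to get \(\ip{z^k}{z}\to\norm{z}^2\), and use \(\norm{z^k}\to\norm{z}\) for the first term; the appeal to Riesz representation is unnecessary, since your inner-product definition of weak convergence already covers testing against \(z\).
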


\begin{proof}
By weak convergence, \(\ip{z^k}{z}\to\norm{z}^2\).  Hence
\[
  \norm{z^k-z}^2=\norm{z^k}^2-2\ip{z^k}{z}+\norm{z}^2\to0.
\]
This is the Kadec--Klee property of Hilbert spaces; see, for example,
\cite[Proposition~3.32]{BauschkeCombettes2011}.
\end{proof}

The following lemma is the key reason that the Minty half-space must be built at
an exactly feasible point.

\begin{lemma}\label{lem:ophalfspace}
Let \((y,v)\in\Gph(T)\) with \(y\in C\), and define
\begin{equation}\label{eq:Hyv}
  H(y,v):=\{x\in\Hi:\ip{v}{x-y}\le0\}.
\end{equation}
Then \(S^*\subseteq H(y,v)\).
\end{lemma}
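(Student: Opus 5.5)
The argument is direct: combine the defining inequality of a solution with the monotonicity of \(T\), using the feasibility of \(y\) as the test point in \eqref{eq:VIP}. Fix an arbitrary \(x^*\in S^*\). By the definition of \(S^*\) there is some \(u^*\in T(x^*)\) with \(x^*\in C\) and \(\ip{u^*}{x-x^*}\ge0\) for all \(x\in C\). Since \(y\in C\) by hypothesis, we may take \(x=y\) and obtain \(\ip{u^*}{y-x^*}\ge0\).

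Next I use the monotonicity of \(T\) on the two graph points \((x^*,u^*)\) and \((y,v)\), both of which lie in \(\Gph(T)\). Monotonicity gives \(\ip{v-u^*}{y-x^*}\ge0\), that is, \(\ip{v}{y-x^*}\ge\ip{u^*}{y-x^*}\). Combined with the previous step this yields \(\ip{v}{y-x^*}\ge0\), equivalently \(\ip{v}{x^*-y}\le0\). By \eqref{eq:Hyv} this says \(x^*\in H(y,v)\). Since \(x^*\in S^*\) was arbitrary, \(S^*\subseteq H(y,v)\).

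There is no real obstacle here. The only point requiring care is that the variational inequality at \(x^*\) can be tested only at feasible points, which is exactly why the hypothesis \(y\in C\) is needed. This is what fails for infeasible \(y\), as the paper remarks before the statement. Maximality of \(T\) is not used; only monotonicity and the existence of the solution selection \(u^*\) are needed.
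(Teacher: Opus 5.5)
Your proof is correct and follows the paper's argument essentially verbatim: you test the variational inequality at the feasible point \(y\) to get \(\ip{u^*}{y-x^*}\ge0\), add the monotonicity inequality \(\ip{v-u^*}{y-x^*}\ge0\), and conclude \(\ip{v}{x^*-y}\le0\). Your remark that only monotonicity of \(T\), and not maximality, is used here is also accurate.
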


\begin{proof}
Let \(x^*\in S^*\), and choose \(u^*\in T(x^*)\) satisfying
\eqref{eq:VIP}.  Since \(y\in C\), the variational inequality gives
\(\ip{u^*}{y-x^*}\ge0\).  By monotonicity of \(T\),
\(\ip{v-u^*}{y-x^*}\ge0\).  Adding the two inequalities yields
\(\ip{v}{y-x^*}\ge0\), equivalently \(\ip{v}{x^*-y}\le0\).
\end{proof}

For later use define the Minty gap
\begin{equation}\label{eq:gap}
  \Phi(x):=\sup\bigl\{\ip{w}{x-z}: z\in C,\; w\in T(z)\bigr\},
  \qquad \Phi_+(x):=\max\{\Phi(x),0\}.
\end{equation}
The value \(\Phi(x)\) is allowed to be \(+\infty\).  The oracle assumption in
\Cref{sec:algorithm} guarantees that \(\Phi_+(x^k)\) is finite along the
iterates.

\begin{definition}\label{def:normalcone}
For a nonempty closed convex set \(C\subseteq\Hi\), the normal cone to \(C\) is
\begin{equation*}
  \NC(x):=\{n\in\Hi:\ip{n}{z-x}\le0\ \forall z\in C\},\qquad x\in C,
\end{equation*}
and \(\NC(x)=\emptyset\) if \(x\notin C\).
\end{definition}

\begin{lemma}\label{lem:minty}
Assume that \(T\) is maximal monotone, \(C\subseteq\dom T\) is nonempty, closed
and convex, \(S^*\ne\emptyset\), and \(T+\NC\) is maximal monotone.  Then
\begin{equation}\label{eq:mintyrep}
  S^*=\{x\in C:\Phi(x)\le0\}.
\end{equation}
Consequently, \(S^*\) is closed and convex.
\end{lemma}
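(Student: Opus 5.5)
We prove the two inclusions separately; closedness and convexity then follow from the resulting representation.

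\emph{Inclusion \(S^*\subseteq\{x\in C:\Phi(x)\le0\}\).} This is essentially \Cref{lem:ophalfspace}. If \(x^*\in S^*\), then \(x^*\in C\) by definition of \eqref{eq:VIP}. For any \(z\in C\) and \(w\in T(z)\), applying \Cref{lem:ophalfspace} with \((y,v)=(z,w)\) gives \(\ip{w}{x^*-z}\le0\). Taking the supremum over all such pairs yields \(\Phi(x^*)\le0\).

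\emph{Reverse inclusion.} Let \(x\in C\) with \(\Phi(x)\le0\), that is, \(\ip{w}{x-z}\le0\) for all \(z\in C\) and \(w\in T(z)\). We use maximality of \(A:=T+\NC\) and show that \((x,0)\) is monotonically related to \(\Gph(A)\). Take any \((z,a)\in\Gph(A)\), so \(z\in C\) and \(a=w+n\) with \(w\in T(z)\) and \(n\in\NC(z)\). Then
\[
\ip{a-0}{z-x}=\ip{w}{z-x}+\ip{n}{z-x}.
\]
The first term is \(\ge0\) by the assumption \(\Phi(x)\le0\). The second term is \(\ge0\) because \(x\in C\) and \(n\in\NC(z)\) give \(\ip{n}{x-z}\le0\). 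Hence \(\ip{a-0}{z-x}\ge0\) for every \((z,a)\in\Gph(A)\).

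Maximal monotonicity of \(A\) now forces \((x,0)\in\Gph(A)\). So there exist \(u\in T(x)\) and \(n\in\NC(x)\) with \(u+n=0\). This means \(\ip{-u}{z-x}\le0\) for all \(z\in C\), i.e.\ \(\ip{u}{z-x}\ge0\), so \(x\in S^*\) with witness \(u\). This maximality step is the crux. It is exactly where the hypothesis that \(T+\NC\) is maximal monotone is needed, because we have to recover an element \(u\in T(x)\) from a purely Minty-type inequality. The hypothesis \(C\subseteq\dom T\) guarantees that the supremum in \eqref{eq:gap} is over a meaningful set and matches the usual Rockafellar setting. We invoke no further regularity.

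\emph{Closedness and convexity.} By \eqref{eq:gap}, \(\Phi\) is a supremum of continuous affine functions \(x\mapsto\ip{w}{x-z}\). Hence \(\Phi\) is convex and lower semicontinuous, with values in \(\R\cup\{+\infty\}\). Its sublevel set \(\{\Phi\le0\}\) is therefore closed and convex. Since \(C=\{c\le0\}\) is closed and convex by continuity and convexity of \(c\), the intersection \(\{x\in C:\Phi(x)\le0\}=S^*\) is closed and convex.
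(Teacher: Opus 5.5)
Your proposal is correct and follows the paper's proof essentially step for step. You obtain the forward inclusion from \Cref{lem:ophalfspace}, and the reverse inclusion by showing that \((x,0)\) is monotonically related to \(\Gph(T+\NC)\) and invoking maximality. Your lower-semicontinuous sublevel-set argument for closedness and convexity amounts to the paper's explicit representation of \(S^*\) as \(C\) intersected with the closed half-spaces \(\{x:\ip{w}{x-z}\le0\}\), \(z\in C\), \(w\in T(z)\).
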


\begin{proof}
If \(x^*\in S^*\), then \Cref{lem:ophalfspace} gives
\(\ip{w}{x^*-z}\le0\) for every \((z,w)\in\Gph(T)\) with \(z\in C\).  Hence
\(\Phi(x^*)\le0\).

Conversely, suppose that \(x\in C\) and \(\Phi(x)\le0\).  Then
\begin{equation}\label{eq:mintyineq}
  \ip{w}{x-z}\le0
  \qquad \forall z\in C,
  \quad \forall w\in T(z).
\end{equation}
Let \((z,a)\in\Gph(T+\NC)\).  Then \(z\in C\) and there exist
\(w\in T(z)\) and \(n\in\NC(z)\) such that \(a=w+n\).  Since \(x\in C\),
\(\ip{n}{x-z}\le0\), while \eqref{eq:mintyineq} gives
\(\ip{w}{x-z}\le0\).  Hence
\begin{equation*}
  \ip{a}{x-z}\le0,
  \qquad\text{or equivalently}\qquad
  \ip{0-a}{x-z}\ge0.
\end{equation*}
Thus \((x,0)\) is monotonically related to \(\Gph(T+\NC)\).  Since
\(T+\NC\) is maximal monotone, Minty's maximality criterion implies
\(0\in (T+\NC)(x)\).  Therefore there exist \(u\in T(x)\) and
\(n\in\NC(x)\) with \(u+n=0\).  Since \(n=-u\), the definition of the normal
cone gives
\begin{equation*}
  \ip{u}{z-x}\ge0\qquad \forall z\in C.
\end{equation*}
Thus \(x\in S^*\).  Finally,
\begin{equation*}
  S^*=C\cap\bigcap_{\substack{z\in C\\ w\in T(z)}}
  \{x\in\Hi:\ip{w}{x-z}\le0\},
\end{equation*}
so \(S^*\) is an intersection of closed convex sets.
\end{proof}

\begin{remark}\label{rem:mintyconditions}
The previous proof is the standard Minty argument for variational inequalities;
see Minty's maximality theorem \cite{Minty1962} and, for modern presentations
of set-valued maximal monotone operators, normal-cone sums, and subdifferential
maximal monotonicity,
\cite{BauschkeCombettes2011,BurachikIusem2008,Rockafellar1970}.  The assumption
that \(T+\NC\) is maximal monotone is a convenient regularity condition ensuring
the equivalence between
the Minty and Stampacchia variational inequalities on \(C\).  It is satisfied,
for example, when \(T\) has full domain.  More general domain qualifications are
possible, but then the Minty representation has to be stated relative to
\(C\cap\dom T\).
\end{remark}

\section{Algorithm and standing assumptions}\label{sec:algorithm}

We impose the following assumptions.

\begin{assumption}\label{ass:constraint}
The function \(c:\Hi\to\R\) is convex and continuous, \(C=\{x:c(x)\le0\}\) is
nonempty, and for every bounded set \(B\subset\Hi\) there exists
\(M_B>0\) such that
\begin{equation}\label{eq:subgbound}
  \norm{g}\le M_B
  \qquad \forall x\in B \text{ with } c(x)>0,\quad \forall g\in\partial c(x).
\end{equation}
In addition, \(\partial c(x)\ne\emptyset\) whenever \(c(x)>0\).
\end{assumption}

\begin{assumption}\label{ass:slater}
There exists a point \(s\in\Hi\) such that
\begin{equation}\label{eq:slater}
  c(s)<0.
\end{equation}
The point \(s\) is fixed and available to the algorithm.
\end{assumption}

\begin{assumption}\label{ass:operator}
The operator \(T:\Hi\rightrightarrows\Hi\) is maximal monotone,
\(C\subseteq\dom T\), \(S^*\ne\emptyset\), and \(T+\NC\) is maximal monotone.
This last condition is a standard qualification ensuring the Minty--Stampacchia
equivalence on \(C\); in particular, it holds when \(T\) has full domain by
Rockafellar's sum theorem, or equivalently by standard maximal-monotonicity sum
criteria; see \cite[Theorem~24.1]{BauschkeCombettes2011} and
\cite{Rockafellar1970}.
\end{assumption}

\begin{assumption}\label{ass:oracle}
There exist \(\sigma\in(0,1]\), a sequence \((\varepsilon_k)\subset\R_+\) with
\(\varepsilon_k\to0\), and an oracle that, for every \(x^k\), returns a trial
point \(\widetilde y^k\in\Hi\).  From \(\widetilde y^k\) and the Slater point
\(s\) of \Cref{ass:slater}, define
\begin{equation}\label{eq:lambdak}
  \lambda_k:=
  \begin{cases}
  0, & c(\widetilde y^k)\le0,\\[2mm]
  \dfrac{c(\widetilde y^k)}{c(\widetilde y^k)-c(s)},
  & c(\widetilde y^k)>0,
  \end{cases}
\end{equation}
and
\begin{equation}\label{eq:slatercorrectedy}
  y^k:=(1-\lambda_k)\widetilde y^k+\lambda_k s.
\end{equation}
The oracle then chooses \(v^k\in T(y^k)\).  Whenever \((x^k)\) is bounded, the
sequences \((y^k)\) and \((v^k)\) are bounded and
\begin{equation}\label{eq:oraclecondition}
  \pos{\ip{v^k}{x^k-y^k}}
  \ge \sigma\Phi_+(x^k)-\varepsilon_k.
\end{equation}
\end{assumption}

\begin{remark}\label{rem:oraclemeaning}
The Slater correction in \eqref{eq:lambdak}--\eqref{eq:slatercorrectedy}
ensures that the graph point at which the operator is evaluated is feasible,
without computing \(P_C\) or \(\dist(\cdot,C)\).  The oracle condition is exact
when \(\sigma=1\), \(\varepsilon_k=0\), and the corrected graph point
\((y^k,v^k)\) attains the supremum in \eqref{eq:gap}.  Approximate maximisers
are also allowed, provided the error tends to zero.  If the Slater correction is
active, then the oracle evaluates the Minty separation at the corrected feasible
point \(y^k\), not at the trial point \(\widetilde y^k\).  Hence
\eqref{eq:oraclecondition} should be read as a certified lower bound on the
computable separation \(\pos{\ip{v^k}{x^k-y^k}}\); any loss caused by correcting
\(\widetilde y^k\) toward the Slater point is included in the error
\(\varepsilon_k\).  The Slater correction is a feasibility device; by itself it
does not replace the Minty-separation condition \eqref{eq:oraclecondition}.
\end{remark}

\begin{lemma}\label{lem:slatercorrection}
Let \(s\in\Hi\) satisfy \(c(s)<0\).  For any \(\widetilde y\in\Hi\), define
\begin{equation}\label{eq:lambdaformula}
  \lambda(\widetilde y):=
  \begin{cases}
  0, & c(\widetilde y)\le0,\\[2mm]
  \dfrac{c(\widetilde y)}{c(\widetilde y)-c(s)}, & c(\widetilde y)>0,
  \end{cases}
\end{equation}
and
\begin{equation}\label{eq:ycorrectedformula}
  y:=(1-\lambda(\widetilde y))\widetilde y+
  \lambda(\widetilde y)s.
\end{equation}
Then \(y\in C\).  Thus the correction produces a feasible point using only
function evaluations of \(c\) and a convex combination with the fixed Slater
point.
\end{lemma}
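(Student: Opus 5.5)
The plan is to split according to the sign of \(c(\widetilde y)\) and use only convexity of \(c\) along the segment joining \(\widetilde y\) and \(s\).

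\emph{Case \(c(\widetilde y)\le0\).} Here \eqref{eq:lambdaformula} gives \(\lambda(\widetilde y)=0\), so \(y=\widetilde y\) and \(c(y)\le0\) directly. Hence \(y\in C\).

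\emph{Case \(c(\widetilde y)>0\).} First I would check that \(\lambda:=\lambda(\widetilde y)\) is a legitimate convex-combination weight. The denominator satisfies \(c(\widetilde y)-c(s)>c(\widetilde y)>0\), since \(c(s)<0\) by \Cref{ass:slater}. It follows that \(\lambda\in(0,1)\). Next I would apply convexity of \(c\) to \(y=(1-\lambda)\widetilde y+\lambda s\):
\[
  c(y)\le(1-\lambda)c(\widetilde y)+\lambda c(s)
  = c(\widetilde y)-\lambda\bigl(c(\widetilde y)-c(s)\bigr).
\]
Substituting the value of \(\lambda\) makes the right-hand side exactly \(c(\widetilde y)-c(\widetilde y)=0\). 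Therefore \(c(y)\le0\), so \(y\in C\).

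This \(\lambda\) is precisely the root of the affine interpolant \(t\mapsto(1-t)c(\widetilde y)+t\,c(s)\), which lies above \(c\) on the segment by convexity. That is why no projection or distance computation is needed: only the two values \(c(\widetilde y)\) and \(c(s)\) enter.

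I do not expect a real obstacle. The only point needing care is confirming that \(\lambda\) is well defined and lies in \([0,1)\), which is what justifies invoking the convexity inequality. This follows from \(c(s)<0\), and it also shows that \(y\) never collapses onto \(s\).
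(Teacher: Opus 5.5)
Your proposal is correct and follows essentially the same route as the paper's proof: the case split on the sign of \(c(\widetilde y)\), the check that \(0<\lambda(\widetilde y)<1\) from \(c(s)<0\), and the convexity inequality \(c(y)\le(1-\lambda)c(\widetilde y)+\lambda c(s)=0\). The only cosmetic difference is that you attribute \(c(s)<0\) to \Cref{ass:slater}, whereas the lemma already includes it as a hypothesis.
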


\begin{proof}
If \(c(\widetilde y)\le0\), then \(\lambda(\widetilde y)=0\) and
\(y=\widetilde y\in C\).  Suppose now that \(c(\widetilde y)>0\).  Since
\(c(s)<0\), one has \(0<\lambda(\widetilde y)<1\).  By convexity of \(c\),
\begin{align*}
  c(y)
  &\le (1-\lambda(\widetilde y))c(\widetilde y)
      +\lambda(\widetilde y)c(s) \\
  &=0.
\end{align*}
Hence \(y\in C\).
\end{proof}

We now state the proposed algorithm.

\begin{algorithm}[H]
\SetAlgoLined
\DontPrintSemicolon
\caption{Feasible Separation Projection Algorithm (FSPA)}
\label{alg:main}
\KwIn{Initial point \(x^0\in\Hi\); Slater point \(s\) satisfying \(c(s)<0\).}
\For{\(k=0,1,2,\ldots\)}{
  \textbf{Step 1: Constraint half-space.}\;
  \uIf{\(c(x^k)>0\)}{
    Choose \(g^k\in\partial c(x^k)\) and set
    \[
      C_k:=\{x\in\Hi:
      c(x^k)+\ip{g^k}{x-x^k}\le0\}.
    \]
  }
  \Else{
    Set \(C_k:=\Hi\).\;
  }

  \textbf{Step 2: Slater-corrected feasible Minty half-space.}\;
  Obtain a trial point \(\widetilde y^k\) from the oracle.\;
  Compute \(\lambda_k\) from \eqref{eq:lambdak} and set
  \[
    y^k=(1-\lambda_k)\widetilde y^k+\lambda_k s.
  \]
  Choose \(v^k\in T(y^k)\) according to Assumption~\ref{ass:oracle}.\;
  Set
  \[
    H_k:=\{x\in\Hi:\ip{v^k}{x-y^k}\le0\}.
  \]

  \textbf{Step 3: Haugazeau half-space.}\;
  Set
  \[
    W_k:=\{x\in\Hi:\ip{x-x^k}{x^0-x^k}\le0\}.
  \]

  \textbf{Step 4: Projection onto half-spaces.}\;
  Compute
  \[
    x^{k+1}:=P_{C_k\cap H_k\cap W_k}(x^0).
  \]
}
\end{algorithm}

\begin{remark}\label{rem:halfspaceqp}
The set \(C_k\cap H_k\cap W_k\) is an intersection of at most three closed
half-spaces.  In finite dimensions, the projection of \(x^0\) onto this set is
the small quadratic program
\begin{equation}\label{eq:smallqp}
  \min_x \frac12\norm{x-x^0}^2
  \quad \text{subject to at most three linear inequalities.}
\end{equation}
It can be solved by Dykstra's cyclic projections, by an active-set enumeration
of the at most three active constraints, or by any standard quadratic-program
solver.  No metric projection onto \(C\) is used.
\end{remark}

\section{Convergence analysis}\label{sec:convergence}

\begin{lemma}\label{lem:containment}
For every \(k\in\N\),
\begin{equation}\label{eq:containCH}
  S^*\subseteq C_k\cap H_k.
\end{equation}
Moreover,
\begin{equation}\label{eq:containCHW}
  S^*\subseteq C_k\cap H_k\cap W_k
  \qquad \forall k\in\N.
\end{equation}
\end{lemma}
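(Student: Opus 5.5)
The first inclusion \eqref{eq:containCH} will be proved separately for each of the two half-spaces, and \eqref{eq:containCHW} will then follow by induction on \(k\) using the projection characterisation of \Cref{lem:proj}.

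\emph{Constraint half-space.} If \(c(x^k)\le0\), then \(C_k=\Hi\) and there is nothing to prove. Otherwise \(g^k\in\partial c(x^k)\), and the subgradient inequality gives
\(c(x)\ge c(x^k)+\ip{g^k}{x-x^k}\) for every \(x\in\Hi\). Hence every \(x\in C\) satisfies \(c(x^k)+\ip{g^k}{x-x^k}\le c(x)\le0\), so \(C\subseteq C_k\). Since \(S^*\subseteq C\) by the definition of \eqref{eq:VIP}, we get \(S^*\subseteq C_k\).

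\emph{Minty half-space.} By \Cref{lem:slatercorrection}, applied with \(\widetilde y=\widetilde y^k\), the corrected point \(y^k\) lies in \(C\). Also \(v^k\in T(y^k)\), so \((y^k,v^k)\in\Gph(T)\) with \(y^k\in C\). \Cref{lem:ophalfspace} then yields \(S^*\subseteq H(y^k,v^k)=H_k\). Together with the previous paragraph this gives \eqref{eq:containCH} for every \(k\).

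\emph{Haugazeau half-space, by induction.} For \(k=0\) we have \(\ip{x-x^0}{x^0-x^0}=0\) for all \(x\), so \(W_0=\Hi\) and \eqref{eq:containCHW} holds at \(k=0\). Suppose \(S^*\subseteq C_k\cap H_k\cap W_k\). Then this intersection is a nonempty (because \(S^*\ne\emptyset\)), closed, convex set, so \(x^{k+1}=P_{C_k\cap H_k\cap W_k}(x^0)\) is well defined. By \Cref{lem:proj},
\[
  \ip{x^0-x^{k+1}}{z-x^{k+1}}\le0 \qquad \forall z\in C_k\cap H_k\cap W_k,
\]
and in particular for every \(z\in S^*\). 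This inequality says exactly that \(z\in W_{k+1}\), so \(S^*\subseteq W_{k+1}\). Combined with \eqref{eq:containCH} at index \(k+1\), this gives \(S^*\subseteq C_{k+1}\cap H_{k+1}\cap W_{k+1}\), which closes the induction.

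The only delicate point is the ordering of the induction. The containment \(S^*\subseteq W_{k+1}\) needs \(x^{k+1}\) to be a well-defined projection onto a set that already contains \(S^*\). That is why nonemptiness, inherited from \(S^*\ne\emptyset\), is carried along inside the inductive hypothesis. The whole argument rests on the feasibility \(y^k\in C\) from the Slater correction, without which \Cref{lem:ophalfspace} would not apply.
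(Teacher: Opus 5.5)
Your proof is correct and follows the paper's argument essentially step by step: the subgradient inequality gives \(S^*\subseteq C_k\), feasibility of the Slater-corrected point together with \Cref{lem:ophalfspace} gives \(S^*\subseteq H_k\), and induction via the projection inequality at \(x^{k+1}\) gives \(S^*\subseteq W_{k+1}\). You also note explicitly that nonemptiness of \(C_k\cap H_k\cap W_k\), inherited from \(S^*\ne\emptyset\), makes \(x^{k+1}\) well defined; the paper leaves this small point implicit.
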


\begin{proof}
Let \(x^*\in S^*\).  If \(c(x^k)\le0\), then \(C_k=\Hi\) and
\(x^*\in C_k\).  If \(c(x^k)>0\), the subgradient inequality gives
\begin{equation*}
  c(x^*)\ge c(x^k)+\ip{g^k}{x^*-x^k}.
\end{equation*}
Since \(c(x^*)\le0\), it follows that
\(c(x^k)+\ip{g^k}{x^*-x^k}\le0\), hence \(x^*\in C_k\).  Also,
\(y^k\in C\) and \((y^k,v^k)\in\Gph(T)\), so \Cref{lem:ophalfspace} gives
\(x^*\in H_k\).  This proves \eqref{eq:containCH}.

We prove \eqref{eq:containCHW} by induction.  Since \(W_0=\Hi\), the claim
holds for \(k=0\).  Assume it holds for some \(k\).  Because
\(x^{k+1}=P_{C_k\cap H_k\cap W_k}(x^0)\) and \(S^*\subseteq C_k\cap H_k\cap W_k\),
the projection inequality gives
\begin{equation*}
  \ip{x^*-x^{k+1}}{x^0-x^{k+1}}\le0
  \qquad \forall x^*\in S^*.
\end{equation*}
Thus \(S^*\subseteq W_{k+1}\).  Combining this with
\eqref{eq:containCH} at index \(k+1\) proves the induction step.
\end{proof}

\begin{lemma}\label{lem:boundedreg}
Let \(\bar x=P_{S^*}(x^0)\) and \(\rho=\norm{x^0-\bar x}\).  Then
\begin{equation}\label{eq:boundedball}
  x^k\in B\!\left[\frac{x^0+\bar x}{2},\frac{\rho}{2}\right]
  \qquad \forall k\in\N.
\end{equation}
Moreover,
\begin{equation}\label{eq:regular}
  \norm{x^{k+1}-x^k}\to0.
\end{equation}
\end{lemma}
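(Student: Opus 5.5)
The argument follows the classical Haugazeau scheme, with \Cref{lem:containment} and \Cref{lem:haugazeau} doing most of the work.

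\textbf{Step 1 (ball bound).} For the ball inclusion \eqref{eq:boundedball}, fix \(k\ge1\). By \eqref{eq:containCHW} at index \(k\), we have \(S^*\subseteq W_k\). The half-space \(W_k\) is exactly the set \(W(x^k)\) of \Cref{lem:haugazeau}. Since \(S^*\) is nonempty, closed and convex by \Cref{lem:minty}, \Cref{lem:haugazeau} applies with \(x=x^k\) and \(S=S^*\). It gives \(x^k\in B[(x^0+\bar x)/2,\rho/2]\). For \(k=0\), note that \(\norm{x^0-(x^0+\bar x)/2}=\rho/2\), so \(x^0\) lies on the boundary of the ball. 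In particular \((x^k)\) is bounded and \(\norm{x^k-x^0}\le\rho\) for all \(k\).

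\textbf{Step 2 (monotone distances).} For asymptotic regularity we use the standard Haugazeau monotonicity argument. We have \(x^{k+1}\in W_k\). Moreover, \(x^k=P_{W_k}(x^0)\): indeed \(x^k\in W_k\) trivially, and the characterisation \eqref{eq:projineq} holds with \(p=x^0\) by the very definition of \(W_k\). When \(x^k=x^0\), \(W_k=\Hi\) and this is still consistent. Applying \eqref{eq:pydist} with \(S=W_k\), \(p=x^0\), \(x=x^k\) and \(z=x^{k+1}\) yields
\[
\norm{x^{k+1}-x^k}^2\le \norm{x^0-x^{k+1}}^2-\norm{x^0-x^k}^2 .
\]
Hence the sequence \(\norm{x^k-x^0}\) is nondecreasing, and by Step 1 it is bounded above by \(\rho\). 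It therefore converges.

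\textbf{Step 3 (telescoping).} Summing the inequality of Step 2 telescopes to
\[
\sum_{k}\norm{x^{k+1}-x^k}^2\le\rho^2<\infty ,
\]
which forces \eqref{eq:regular}.

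The only delicate point is checking that \(x^k=P_{W_k}(x^0)\), including the degenerate case \(x^k=x^0\) where \(W_k=\Hi\). It is also worth remarking that well-definedness of the iteration follows from \eqref{eq:containCHW}: the intersection \(C_k\cap H_k\cap W_k\) contains \(S^*\), so it is nonempty and closed convex, and the projection exists.
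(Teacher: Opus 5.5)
Your proof is correct and follows essentially the same route as the paper: the ball bound comes from \Cref{lem:containment} combined with \Cref{lem:haugazeau}, and asymptotic regularity comes from the inequality \(\norm{x^{k+1}-x^k}^2\le\norm{x^{k+1}-x^0}^2-\norm{x^k-x^0}^2\), obtained from \(x^k=P_{W_k}(x^0)\) and \(x^{k+1}\in W_k\). The differences are cosmetic: you conclude by telescoping, which the paper only does later in \Cref{lem:rate}, rather than by passing to the limit in the convergent monotone sequence; and your separate treatment of \(k=0\) is unnecessary, since \(W_0=\Hi\).
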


\begin{proof}
By \Cref{lem:containment}, \(S^*\subseteq W_k\) for every \(k\).  Applying
\Cref{lem:haugazeau} with \(S=S^*\) and \(x=x^k\) gives
\eqref{eq:boundedball}.

Since \(x^k=P_{W_k}(x^0)\) and \(x^{k+1}\in W_k\), the projection inequality
implies
\begin{equation}\label{eq:mononorm}
  0\le \norm{x^{k+1}-x^k}^2
  \le \norm{x^{k+1}-x^0}^2-\norm{x^k-x^0}^2.
\end{equation}
Thus \((\norm{x^k-x^0})\) is nondecreasing.  By \eqref{eq:boundedball}, it is
bounded above by \(\rho\), hence it converges.  Letting \(k\to\infty\) in
\eqref{eq:mononorm} gives \eqref{eq:regular}.
\end{proof}

\begin{lemma}\label{lem:feasgap}
The sequence generated by \Cref{alg:main} satisfies
\begin{equation}\label{eq:cpluszero}
  c_+(x^k):=\max\{c(x^k),0\}\to0
\end{equation}
and
\begin{equation}\label{eq:gapzero}
  \Phi_+(x^k)\to0.
\end{equation}
\end{lemma}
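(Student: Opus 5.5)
The plan is to turn the asymptotic regularity \eqref{eq:regular} into vanishing of each quantity, using the fact that \(x^{k+1}\) lies in \(C_k\) and in \(H_k\). Both cuts are half-spaces whose defining affine functions are evaluated at the current iterate \(x^k\), so moving from \(x^k\) into the half-space costs a step of length at least (violation)/(norm of the normal vector). Throughout, \((x^k)\) is bounded by \Cref{lem:boundedreg}, say \(x^k\in B\) for a fixed bounded set \(B\).

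\textbf{Feasibility.} If \(c(x^k)\le0\), then \(c_+(x^k)=0\) and there is nothing to prove. If \(c(x^k)>0\), then \(x^{k+1}\in C_k\) gives
\[
c(x^k)\le \ip{g^k}{x^k-x^{k+1}}\le \norm{g^k}\,\norm{x^{k+1}-x^k}.
\]
By \Cref{ass:constraint} applied to \(B\), we have \(\norm{g^k}\le M_B\). Hence
\[
c_+(x^k)\le M_B\norm{x^{k+1}-x^k}\to0
\]
by \eqref{eq:regular}.

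\textbf{Gap.} Since \(x^{k+1}\in H_k\), we have \(\ip{v^k}{x^{k+1}-y^k}\le0\). Therefore
\[
\ip{v^k}{x^k-y^k}\le \ip{v^k}{x^k-x^{k+1}}\le \norm{v^k}\,\norm{x^k-x^{k+1}}.
\]
The right-hand side is nonnegative, so the same bound holds for the positive part \(\pos{\ip{v^k}{x^k-y^k}}\). Since \((x^k)\) is bounded, \Cref{ass:oracle} ensures that \((v^k)\) is bounded, say \(\norm{v^k}\le L\). The oracle inequality \eqref{eq:oraclecondition} then yields
\[
\sigma\Phi_+(x^k)\le L\norm{x^{k+1}-x^k}+\varepsilon_k\to0.
\]
Dividing by \(\sigma>0\) gives \eqref{eq:gapzero}. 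This also shows that \(\Phi_+(x^k)\) is finite.

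\textbf{Main obstacle.} The only delicate point is making sure the boundedness hypotheses are legitimately available. The subgradient bound \(M_B\) applies only at infeasible points, which is exactly the case in which \(C_k\) is a genuine cut, so it is never needed elsewhere. The oracle's boundedness of \((v^k)\) is conditional on bounded \((x^k)\), and this is supplied unconditionally by \Cref{lem:boundedreg}. Once these are in place, both limits follow in one line each.
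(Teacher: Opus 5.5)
Your proposal is correct and follows essentially the same argument as the paper. The cut \(x^{k+1}\in C_k\) with the subgradient bound of \Cref{ass:constraint} gives \(c_+(x^k)\le M_B\norm{x^{k+1}-x^k}\), and \(x^{k+1}\in H_k\) with bounded \((v^k)\) and the oracle inequality \eqref{eq:oraclecondition} gives \(\sigma\Phi_+(x^k)\le \norm{v^k}\norm{x^{k+1}-x^k}+\varepsilon_k\); both vanish by \eqref{eq:regular}, and your remarks on the positive part and the conditional boundedness in the oracle assumption only make explicit what the paper leaves implicit.
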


\begin{proof}
If \(c(x^k)\le0\), then \(c_+(x^k)=0\).  If \(c(x^k)>0\), the inclusion
\(x^{k+1}\in C_k\) gives
\begin{equation*}
  c(x^k)+\ip{g^k}{x^{k+1}-x^k}\le0.
\end{equation*}
Therefore
\begin{equation}\label{eq:ccontrol}
  0<c(x^k)\le \norm{g^k}\,\norm{x^{k+1}-x^k}.
\end{equation}
By \Cref{lem:boundedreg}, \((x^k)\) is bounded.  Assumption
\ref{ass:constraint} implies that \((g^k)\) is bounded along the indices for
which \(c(x^k)>0\).  Together with \eqref{eq:regular}, this proves
\eqref{eq:cpluszero}.

Next, since \(x^{k+1}\in H_k\),
\begin{equation*}
  \ip{v^k}{x^{k+1}-y^k}\le0.
\end{equation*}
Consequently,
\begin{align*}
  \ip{v^k}{x^k-y^k}
  &=\ip{v^k}{x^k-x^{k+1}}
    +\ip{v^k}{x^{k+1}-y^k}  \\
  &\le \norm{v^k}\,\norm{x^k-x^{k+1}}.
\end{align*}
Since \Cref{lem:boundedreg} gives boundedness of \((x^k)\),
Assumption~\ref{ass:oracle} implies boundedness of \((y^k)\) and \((v^k)\).
Together with \eqref{eq:regular}, this yields
\begin{equation*}
  \pos{\ip{v^k}{x^k-y^k}}\to0.
\end{equation*}
Using the oracle condition \eqref{eq:oraclecondition},
\begin{equation*}
  \Phi_+(x^k)
  \le \frac{1}{\sigma}
  \left(\pos{\ip{v^k}{x^k-y^k}}+\varepsilon_k\right)\to0.
\end{equation*}
\end{proof}

\begin{lemma}\label{lem:rate}
Let \(\bar x=P_{S^*}(x^0)\) and set \(\rho=\norm{x^0-\bar x}\).  Let
\(M_c>0\) be a bound for the subgradients of \(c\) on the bounded set
containing the iterates in \Cref{lem:boundedreg}, along the indices for which
\(c(x^k)>0\), and let
\[
  M_T:=\sup_k\norm{v^k}<+\infty,
\]
whose finiteness follows from \Cref{ass:oracle} and the boundedness of
\((x^k)\).  Then, for every \(N\ge1\),
\begin{align}
  \min_{0\le i<N}\norm{x^{i+1}-x^i}
  &\le \frac{\rho}{\sqrt N}, \label{eq:rate-step}\\
  \min_{0\le i<N} c_+(x^i)
  &\le \frac{M_c\rho}{\sqrt N}, \label{eq:rate-feas}\\
  \min_{0\le i<N} \Phi_+(x^i)
  &\le \frac{M_T\rho}{\sigma\sqrt N}
       +\frac1\sigma\max_{0\le i<N}\varepsilon_i. \label{eq:rate-minty}
\end{align}
In particular, for exact oracles, \(\varepsilon_i=0\),
\begin{equation}\label{eq:rate-exact}
  \min_{0\le i<N} \Phi_+(x^i)
  \le \frac{M_T\rho}{\sigma\sqrt N}.
\end{equation}
Moreover, for every \(0\le m<N\),
\begin{equation}\label{eq:rate-tail}
  \min_{m\le i<N} \Phi_+(x^i)
  \le
  \frac{M_T}{\sigma}
  \left(\frac{\rho^2-\norm{x^m-x^0}^2}{N-m}\right)^{1/2}
  +\frac1\sigma\max_{m\le i<N}\varepsilon_i.
\end{equation}
\end{lemma}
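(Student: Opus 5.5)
The plan is to telescope the monotonicity inequality \eqref{eq:mononorm} from the proof of \Cref{lem:boundedreg}. Summing it over \(i=m,\dots,N-1\) gives
\[
  \sum_{i=m}^{N-1}\norm{x^{i+1}-x^i}^2\le \norm{x^N-x^0}^2-\norm{x^m-x^0}^2\le \rho^2-\norm{x^m-x^0}^2,
\]
where the last step uses \(\norm{x^N-x^0}\le\rho\) from \Cref{lem:haugazeau} and \eqref{eq:boundedball}. A minimum is at most an average, so
\[
  \min_{m\le i<N}\norm{x^{i+1}-x^i}^2\le \frac{\rho^2-\norm{x^m-x^0}^2}{N-m}.
\]
Taking \(m=0\), where \(\norm{x^0-x^0}=0\), yields \eqref{eq:rate-step}.

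For the other two estimates I would bound each residual at index \(i\) by a constant times \(\norm{x^{i+1}-x^i}\), using the pointwise inequalities already obtained in \Cref{lem:feasgap}.

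\textbf{Feasibility.} If \(c(x^i)>0\), then \eqref{eq:ccontrol} gives \(c_+(x^i)\le M_c\norm{x^{i+1}-x^i}\). If \(c(x^i)\le0\), then \(c_+(x^i)=0\) and the same bound holds trivially. So \(c_+(x^i)\le M_c\norm{x^{i+1}-x^i}\) for every \(i\).

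\textbf{Minty gap.} From \(x^{i+1}\in H_i\) we have \(\ip{v^i}{x^i-y^i}\le\norm{v^i}\norm{x^i-x^{i+1}}\), hence \(\pos{\ip{v^i}{x^i-y^i}}\le M_T\norm{x^{i+1}-x^i}\). The oracle condition \eqref{eq:oraclecondition} then gives
\[
  \Phi_+(x^i)\le \frac{M_T}{\sigma}\norm{x^{i+1}-x^i}+\frac{\varepsilon_i}{\sigma}.
\]
The oracle condition applies because \((x^k)\) is bounded by \Cref{lem:boundedreg}, which is also why \(M_T<\infty\).

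The key step is to take the minimum of the residual at one fixed index. Let \(i^*\in\{m,\dots,N-1\}\) be an index minimising \(\norm{x^{i+1}-x^i}\). Then
\[
  \min_{m\le i<N} c_+(x^i)\le c_+(x^{i^*})\le M_c\norm{x^{i^*+1}-x^{i^*}}.
\]
Similarly,
\[
  \min_{m\le i<N}\Phi_+(x^i)\le \Phi_+(x^{i^*})\le \frac{M_T}{\sigma}\norm{x^{i^*+1}-x^{i^*}}+\frac{1}{\sigma}\varepsilon_{i^*}.
\]
Finally bound \(\varepsilon_{i^*}\le\max_{m\le i<N}\varepsilon_i\). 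With \(m=0\) this gives \eqref{eq:rate-feas} and \eqref{eq:rate-minty}. Setting \(\varepsilon_i=0\) gives \eqref{eq:rate-exact}, and a general \(m\) gives the tail bound \eqref{eq:rate-tail}.

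The main obstacle is only bookkeeping. The minimum of a sum is not the sum of the minima, which is why the argument fixes the single index \(i^*\) and bounds the \(\varepsilon\)-term by its maximum over the window. One must also check that \(M_c\) and \(M_T\) are uniform constants. For \(M_c\), \Cref{ass:constraint} applied to the bounded ball \eqref{eq:boundedball} gives a uniform subgradient bound at the indices where \(c(x^k)>0\). For \(M_T\), \Cref{ass:oracle} gives boundedness of \((v^k)\) because \((x^k)\) is bounded.
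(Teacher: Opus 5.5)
Your proposal is correct and follows the paper's proof essentially step for step. Both arguments telescope \eqref{eq:mononorm} with \(\norm{x^N-x^0}\le\rho\) and use minimum-versus-average. Both use the pointwise bounds \(c_+(x^i)\le M_c\norm{x^{i+1}-x^i}\) and \(\Phi_+(x^i)\le \frac{M_T}{\sigma}\norm{x^{i+1}-x^i}+\frac{\varepsilon_i}{\sigma}\), evaluated at the step-minimising index with \(\varepsilon_{i^*}\) bounded by the window maximum. The only difference is organisational: you handle a general window \(m\le i<N\) first and specialise to \(m=0\), whereas the paper does \(m=0\) first and then repeats the argument for the tail estimate.
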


\begin{proof}
Summing \eqref{eq:mononorm} from \(i=0\) to \(N-1\) gives
\[
  \sum_{i=0}^{N-1}\norm{x^{i+1}-x^i}^2
  \le \norm{x^N-x^0}^2-\norm{x^0-x^0}^2
  \le \rho^2.
\]
Therefore the minimum squared step is bounded by the average, which proves
\eqref{eq:rate-step}.

The estimate \eqref{eq:ccontrol}, together with the boundedness of the relevant
subgradients, gives
\[
  c_+(x^i)\le M_c\norm{x^{i+1}-x^i}
  \qquad \forall i.
\]
Combining this inequality with \eqref{eq:rate-step} gives
\eqref{eq:rate-feas}.

From the proof of \Cref{lem:feasgap},
\[
  \pos{\ip{v^i}{x^i-y^i}}
  \le M_T\norm{x^{i+1}-x^i}.
\]
The oracle condition \eqref{eq:oraclecondition} then yields, for all \(i\),
\[
  \Phi_+(x^i)
  \le \frac{M_T}{\sigma}\norm{x^{i+1}-x^i}
       +\frac{\varepsilon_i}{\sigma}.
\]
Taking the index that minimises the step over \(0\le i<N\), and then using the
largest oracle error over the same range, gives \eqref{eq:rate-minty}.  The
exact-oracle estimate \eqref{eq:rate-exact} is the special case
\(\varepsilon_i=0\).

Finally, summing \eqref{eq:mononorm} from \(i=m\) to \(N-1\) gives
\[
  \sum_{i=m}^{N-1}\norm{x^{i+1}-x^i}^2
  \le \rho^2-\norm{x^m-x^0}^2.
\]
The same minimum-versus-average argument on the range \(m\le i<N\), followed by
the previous Minty-gap estimate, proves \eqref{eq:rate-tail}.
\end{proof}

\begin{theorem}\label{thm:mainstrong}
Suppose that Assumptions \ref{ass:constraint}--\ref{ass:oracle} hold.  Then the
sequence \((x^k)\) generated by \Cref{alg:main} converges strongly to
\begin{equation*}
  \bar x=P_{S^*}(x^0).
\end{equation*}
\end{theorem}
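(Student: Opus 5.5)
The plan is the standard Haugazeau argument: show that every weak cluster point lies in \(S^*\), then use the norm bound from \Cref{lem:haugazeau} to upgrade weak convergence to strong convergence. By \Cref{lem:boundedreg}, \((x^k)\) is bounded, \(\norm{x^k-x^0}\le\rho=\norm{x^0-\bar x}\) for all \(k\), and \(\norm{x^k-x^0}\) is nondecreasing and convergent. Also, \(S^*\) is nonempty, closed and convex by \Cref{lem:minty} (using \Cref{ass:operator}), so \(\bar x\) is well defined. Fix any subsequence \(x^{k_j}\weak z\). Every subsequence of the bounded sequence \((x^k)\) has such a further weakly convergent subsequence, so it suffices to show that \(x^{k_j}\to\bar x\) strongly.

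\textbf{Step 1: \(z\in C\).} Since \(c\) is convex and continuous, it is weakly lower semicontinuous. \Cref{lem:feasgap} gives \(c_+(x^{k_j})\to0\), hence
\[
c(z)\le\liminf_j c(x^{k_j})\le \liminf_j c_+(x^{k_j})=0.
\]

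\textbf{Step 2: \(\Phi(z)\le0\).} For each fixed \((z',w)\in\Gph(T)\) with \(z'\in C\), the map \(x\mapsto\ip{w}{x-z'}\) is continuous and affine, hence weakly continuous. Therefore \(\Phi\) is a supremum of weakly continuous functions and is weakly lower semicontinuous. This gives
\[
\Phi(z)\le\liminf_j\Phi(x^{k_j})\le\liminf_j\Phi_+(x^{k_j})=0,
\]
where the last equality is \Cref{lem:feasgap}. By \Cref{lem:minty}, \(z\in S^*\). This step is the main obstacle, and it is exactly where the oracle condition \eqref{eq:oraclecondition} and the Minty representation are used. Weak lower semicontinuity of the gap is what lets us avoid passing to the limit in products \(\ip{v^k}{x^k-y^k}\) of merely weakly convergent sequences.

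\textbf{Step 3: strong convergence.} Weak lower semicontinuity of the norm gives
\[
\norm{z-x^0}\le\liminf_j\norm{x^{k_j}-x^0}\le\lim_k\norm{x^k-x^0}\le\rho.
\]
Since \(z\in S^*\) and \(\bar x\) is the unique nearest point of \(S^*\) to \(x^0\), we also have \(\norm{z-x^0}\ge\rho\). Hence \(z=\bar x\) and \(\norm{x^{k_j}-x^0}\to\norm{\bar x-x^0}\). Because \(x^{k_j}-x^0\weak\bar x-x^0\), \Cref{lem:kadecklee} yields \(x^{k_j}\to\bar x\) strongly.

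Since every subsequence of \((x^k)\) has a further subsequence converging strongly to the same limit \(\bar x\), the whole sequence converges strongly to \(\bar x\). Equivalently, once \(\lim_k\norm{x^k-x^0}=\rho\) is known and every weak cluster point equals \(\bar x\), one can apply \Cref{lem:kadecklee} to the whole sequence.
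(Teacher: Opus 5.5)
Your proof is correct and follows essentially the same route as the paper: weak lower semicontinuity of $c$ and of the Minty gap $\Phi$ (a supremum of weakly continuous affine maps) puts every weak cluster point in $S^*$ via \Cref{lem:minty}, and the Haugazeau bound $\norm{x^k-x^0}\le\rho$ forces that cluster point to be $\bar x$, after which \Cref{lem:kadecklee} gives strong convergence. The only difference is cosmetic: you finish with a subsequence-of-subsequence argument, applying Kadec--Klee to $x^{k_j}-x^0$, whereas the paper first deduces weak convergence of the whole sequence and then applies Kadec--Klee once.
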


\begin{proof}
Let \(\bar x=P_{S^*}(x^0)\) and \(\rho=\norm{x^0-\bar x}\).  By
\Cref{lem:boundedreg}, \((x^k)\) is bounded, hence it has weak cluster points.
Let \(x^{k_j}\weak \hat x\).

By \eqref{eq:cpluszero} and the weak lower semicontinuity of the convex
continuous function \(c\),
\begin{equation*}
  c(\hat x)\le \liminf_{j\to\infty} c(x^{k_j})\le0,
\end{equation*}
so \(\hat x\in C\).  The family in \eqref{eq:gap} is nonempty because
\(C\subseteq\dom T\) and \(C
e\emptyset\).  For each fixed
\((z,w)\in\Gph(T)\) with \(z\in C\), the mapping
\(x\mapsto\ip{w}{x-z}\) is weakly continuous.  Since \(\Phi\) is the pointwise
supremum of these weakly continuous affine functions, \(\Phi\) is weakly lower
semicontinuous.  Since \(\Phi(x^k)\le \Phi_+(x^k)\) for every \(k\),
\eqref{eq:gapzero} gives \(\limsup_{k\to\infty}\Phi(x^k)\le0\).  Hence
\begin{equation*}
  \Phi(\hat x)\le \liminf_{j\to\infty}\Phi(x^{k_j})\le0.
\end{equation*}
By \Cref{lem:minty}, \(\hat x\in S^*\).  Thus every weak cluster point of
\((x^k)\) belongs to \(S^*\).

The sequence \((\norm{x^k-x^0})\) is nondecreasing and bounded above by
\(\rho\), so it has a limit \(\ell\le\rho\).  For the weakly convergent
subsequence above, since \(\hat x\in S^*\),
\begin{equation*}
  \rho\le \norm{\hat x-x^0}
  \le \liminf_{j\to\infty}\norm{x^{k_j}-x^0}
  =\ell\le\rho.
\end{equation*}
Thus all inequalities are equalities.  Since \(\bar x=P_{S^*}(x^0)\) is unique,
\(\hat x=\bar x\).  Therefore every weak cluster point is \(\bar x\), and
\(x^k\weak\bar x\).  Moreover,
\(\norm{x^k-x^0}\to\norm{\bar x-x^0}\).  By \Cref{lem:kadecklee},
\(x^k\to\bar x\) strongly.
\end{proof}

\begin{remark}
The proof of \Cref{thm:mainstrong} uses exact containment
\(S^*\subseteq H_k\) at every iteration.  This containment follows from
\Cref{lem:ophalfspace} only because \(y^k\in C\).  If \(y^k\) were merely
approximately feasible, \(H_k\) could exclude the solution set, and the
boundedness, Fejer-type and convergence arguments would fail.  The Slater
correction removes this feasibility gap without requiring \(P_C\).  The
separate Minty-separation condition \eqref{eq:oraclecondition} is what drives
the vanishing-gap argument.
\end{remark}

\section{Strongly monotone point-to-set operators}\label{sec:strongmonotone}

The algorithm and convergence theorem above are already formulated for a
point-to-set operator \(T:\Hi\rightrightarrows\Hi\).  Thus the paper does not
need to be restricted to single-valued mappings.  In this section we record the
additional consequences obtained when the point-to-set operator is strongly
monotone.  This is the correct operator-theoretic analogue of strong convexity.

\begin{definition}
Let \(\mu>0\).  A point-to-set operator \(T:\Hi\rightrightarrows\Hi\) is said to
be \(\mu\)-strongly monotone if
\begin{equation}\label{eq:strongmonotone}
  \ip{u-v}{x-y}\ge \mu\norm{x-y}^2
  \qquad
  \forall (x,u),(y,v)\in\Gph(T).
\end{equation}
\end{definition}

\begin{remark}
The notion in \eqref{eq:strongmonotone} is the natural extension of strong
convexity to point-to-set operators.  If
\(f:\Hi\to(-\infty,+\infty]\) is proper, lower semicontinuous and convex, then
\(f\) is \(\mu\)-strongly convex if and only if its subdifferential
\(\partial f\) is \(\mu\)-strongly monotone.  Therefore, in the nonsmooth convex
case, replacing strong convexity of the objective by strong monotonicity of
\(\partial f\) is the correct point-to-set formulation.
\end{remark}

\begin{proposition}\label{prop:strongunique}
Suppose that \(T:\Hi\rightrightarrows\Hi\) is \(\mu\)-strongly monotone for some
\(\mu>0\).  Then the variational inequality \eqref{eq:VIP} has at most one
solution.  Consequently, if \(S^*\ne\emptyset\), then \(S^*=\{x^*\}\).
\end{proposition}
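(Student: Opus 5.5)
I plan a direct argument by contradiction, using only the definition of \eqref{eq:VIP} and the strong monotonicity inequality \eqref{eq:strongmonotone}. Suppose \(x_1^*,x_2^*\in S^*\). By definition of a solution there exist \(u_1\in T(x_1^*)\) and \(u_2\in T(x_2^*)\) such that
\[
  \ip{u_1}{x-x_1^*}\ge0 \quad\text{and}\quad \ip{u_2}{x-x_2^*}\ge0 \qquad \forall x\in C.
\]
Since both \(x_1^*\) and \(x_2^*\) lie in \(C\), I can test the first inequality with \(x=x_2^*\) and the second with \(x=x_1^*\). This gives \(\ip{u_1}{x_2^*-x_1^*}\ge0\) and \(\ip{u_2}{x_1^*-x_2^*}\ge0\).

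Adding the two inequalities and rearranging gives
\[
  \ip{u_1-u_2}{x_1^*-x_2^*}\le0 .
\]
The pairs \((x_1^*,u_1)\) and \((x_2^*,u_2)\) both lie in \(\Gph(T)\), so \eqref{eq:strongmonotone} applies and yields
\[
  \mu\norm{x_1^*-x_2^*}^2\le \ip{u_1-u_2}{x_1^*-x_2^*}\le0 .
\]
Because \(\mu>0\), it follows that \(x_1^*=x_2^*\). Hence \eqref{eq:VIP} has at most one solution, and if \(S^*\ne\emptyset\) then \(S^*\) is a singleton \(\{x^*\}\).

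I do not expect a genuine obstacle here. The only point needing care is the multivalued setting: strong monotonicity must be applied to the particular selections \(u_1,u_2\) that certify the variational inequality at each solution, not to arbitrary elements of \(T(x_i^*)\). The argument uses neither \Cref{lem:minty} nor the maximality assumptions.
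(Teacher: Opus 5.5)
Your proof is correct and follows the paper's argument essentially step for step. You test each solution's variational inequality at the other solution, add the results to get \(\ip{u_1-u_2}{x_1^*-x_2^*}\le0\), and combine this with \(\mu\)-strong monotonicity applied to the certifying selections to force \(x_1^*=x_2^*\); the only difference is cosmetic, since the argument is direct rather than by contradiction.
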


\begin{proof}
Let \(x^*,y^*\in S^*\).  Then there exist
\(u^*\in T(x^*)\) and \(v^*\in T(y^*)\) such that
\begin{equation*}
  \ip{u^*}{y^*-x^*}\ge0,
  \qquad
  \ip{v^*}{x^*-y^*}\ge0.
\end{equation*}
Adding the two inequalities gives
\begin{equation*}
  \ip{u^*-v^*}{x^*-y^*}\le0.
\end{equation*}
On the other hand, strong monotonicity gives
\begin{equation*}
  \ip{u^*-v^*}{x^*-y^*}\ge \mu\norm{x^*-y^*}^2.
\end{equation*}
Hence \(0\ge\mu\norm{x^*-y^*}^2\), and therefore \(x^*=y^*\).
\end{proof}

\begin{corollary}\label{cor:strongunique}
Assume \Cref{ass:constraint,ass:slater,ass:operator,ass:oracle}.  If, in addition,
\(T\) is \(\mu\)-strongly monotone for some \(\mu>0\), then the sequence
\((x^k)\) generated by \Cref{alg:main} converges strongly to the unique solution
\(x^*\in S^*\).
\end{corollary}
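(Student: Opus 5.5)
The plan is to combine \Cref{prop:strongunique} with \Cref{thm:mainstrong}; no new convergence argument is needed. The assumptions of the corollary are exactly \Cref{ass:constraint,ass:slater,ass:operator,ass:oracle}, which are the hypotheses of \Cref{thm:mainstrong}, plus strong monotonicity of \(T\).

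First, \Cref{ass:operator} gives \(S^*\neq\emptyset\), and the additional hypothesis makes \(T\) \(\mu\)-strongly monotone. \Cref{prop:strongunique} then yields \(S^*=\{x^*\}\) for a unique \(x^*\). Second, since \(S^*\) is a nonempty closed convex set (by \Cref{lem:minty}, or trivially because it is a singleton), the metric projection \(P_{S^*}(x^0)\) is well defined. It equals \(x^*\) for every starting point \(x^0\), because the only point of \(S^*\) is \(x^*\).

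Third, \Cref{thm:mainstrong} applies verbatim and gives \(x^k\to P_{S^*}(x^0)=x^*\) strongly, which is the claim. Strong monotonicity implies plain monotonicity, but this is not needed separately, since maximal monotonicity is already part of \Cref{ass:operator}.

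I expect no genuine obstacle. The only point to state explicitly is that the limit \(P_{S^*}(x^0)\) coincides with the unique solution, so the limit does not depend on \(x^0\).
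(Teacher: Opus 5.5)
Your proposal is correct and is the same argument as the paper's proof. \Cref{prop:strongunique} gives \(S^*=\{x^*\}\), \Cref{thm:mainstrong} gives \(x^k\to P_{S^*}(x^0)\) strongly, and since \(S^*\) is a singleton this limit is \(x^*\).
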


\begin{proof}
By \Cref{prop:strongunique}, \(S^*=\{x^*\}\).  The main convergence theorem gives
\(x^k\to P_{S^*}(x^0)\) strongly.  Since \(S^*\) is a singleton,
\(P_{S^*}(x^0)=x^*\).
\end{proof}

\begin{remark}\label{rem:tikhonov}
If \(T:\Hi\rightrightarrows\Hi\) is maximal monotone, then for every
\(\varepsilon>0\) the regularised operator
\begin{equation*}
  T_\varepsilon:=T+\varepsilon I,
  \qquad
  T_\varepsilon(x):=\{u+\varepsilon x:\ u\in T(x)\},
\end{equation*}
is \(\varepsilon\)-strongly monotone and maximal monotone.  Hence the
regularised variational inequality associated with \(T_\varepsilon\) has at most
one solution.  This observation may be used to select a distinguished solution
of the original problem, such as the minimum-norm solution, but a full
Tikhonov-type algorithm with \(\varepsilon=\varepsilon_k\downarrow0\) requires an
additional tracking proof.  For this reason it is recorded here only as a
remark and is not used in the main convergence theorem.
\end{remark}

\section{Realisation of the feasible Minty oracle}\label{sec:oracle}

The convergence theorem is stated with an oracle because the computation of a
useful feasible graph point is problem-dependent.  This section explains the
role of the Slater correction and records useful cases in which
Assumption~\ref{ass:oracle} is natural.

\subsection{Using the Slater point to avoid metric projections}

Let \(\widetilde y\in\Hi\) be any trial point.  If it is infeasible, the
correction \eqref{eq:lambdaformula}--\eqref{eq:ycorrectedformula} moves it
along the segment joining \(\widetilde y\) to the fixed Slater point \(s\).  By
\Cref{lem:slatercorrection}, the resulting point belongs to \(C\).  Therefore,
whenever \(T\) can be evaluated at points of \(C\), the pair
\((y,v)\in\Gph(T)\), \(v\in T(y)\), generates a valid separating half-space
containing \(S^*\).  This step uses neither \(P_C\) nor \(\dist(\cdot,C)\).

The Slater correction should be understood as a feasibility mechanism.  The
convergence proof still requires that the corrected graph point satisfy the
Minty-separation estimate \eqref{eq:oraclecondition}.  In exact implementations
one may choose \(\widetilde y=y\in C\) as an optimiser of the Minty-gap
problem, so that the correction is inactive.  In approximate implementations,
the correction provides a safe way to turn infeasible trial points into feasible
operator-evaluation points.

\begin{proposition}\label{prop:slaterloss}
Fix a bounded set \(X\subset\Hi\) of possible iterates and a bounded set
\(B\subset\Hi\) containing the Slater point, the trial points
\(\widetilde y\), and the corrected points
\(y=(1-\lambda)\widetilde y+\lambda s\).  Suppose, in the single-valued
case, that \(T\) is Lipschitz on \(B\) with constant \(L_T\), and set
\(M_T=\sup_{z\in B}\norm{T(z)}\) and
\(R=\sup\{\norm{x-z}:x\in X,\ z\in B\}\).  Then, for every
\(x\in X\),
\begin{equation}\label{eq:slater-loss-bound}
  \left|\ip{T(\widetilde y)}{x-\widetilde y}
  -\ip{T(y)}{x-y}\right|
  \le (L_T R+M_T)\lambda\norm{\widetilde y-s}.
\end{equation}
Consequently, if the uncorrected trial point satisfies
\[
  \pos{\ip{T(\widetilde y)}{x-\widetilde y}}
  \ge \sigma\Phi_+(x)-\eta,
\]
then the corrected point satisfies \eqref{eq:oraclecondition} with the enlarged
error
\begin{equation}\label{eq:slater-loss-eps}
  \varepsilon
  =\eta+(L_T R+M_T)\lambda\norm{\widetilde y-s}.
\end{equation}
In particular, when \(c(\widetilde y)>0\),
\(\lambda=c(\widetilde y)/(c(\widetilde y)-c(s))\), so the loss is explicitly
controlled by the infeasibility of the trial point relative to the Slater margin
\(|c(s)|\).
\end{proposition}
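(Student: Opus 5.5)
The plan is a direct add-and-subtract decomposition, followed by the positive-part monotonicity and the definition of \(\lambda\). The key identity is \(\widetilde y-y=\lambda(\widetilde y-s)\), which follows from \(y=(1-\lambda)\widetilde y+\lambda s\). I would write
\begin{equation*}
  \ip{T(\widetilde y)}{x-\widetilde y}-\ip{T(y)}{x-y}
  =\ip{T(\widetilde y)-T(y)}{x-\widetilde y}
   +\ip{T(y)}{y-\widetilde y}.
\end{equation*}
For the first term I use Cauchy--Schwarz and the Lipschitz bound on \(B\). Both \(\widetilde y\) and \(y\) lie in \(B\), so this term is at most \(L_T\norm{\widetilde y-y}\,\norm{x-\widetilde y}\le L_T R\,\lambda\norm{\widetilde y-s}\), using \(x\in X\), \(\widetilde y\in B\) and the definition of \(R\). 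For the second term, Cauchy--Schwarz with \(\norm{T(y)}\le M_T\) (since \(y\in B\)) gives at most \(M_T\lambda\norm{\widetilde y-s}\). Adding the two bounds yields \eqref{eq:slater-loss-bound}.

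For the consequence I use that \(t\mapsto\pos{t}\) is nondecreasing and \(1\)-Lipschitz. Set \(\delta:=(L_TR+M_T)\lambda\norm{\widetilde y-s}\). By \eqref{eq:slater-loss-bound}, \(\ip{T(y)}{x-y}\ge\ip{T(\widetilde y)}{x-\widetilde y}-\delta\), and therefore
\[
  \pos{\ip{T(y)}{x-y}}\ge\pos{\ip{T(\widetilde y)}{x-\widetilde y}}-\delta
  \ge\sigma\Phi_+(x)-\eta-\delta .
\]
This is \eqref{eq:oraclecondition} with \(v=T(y)\) and \(\varepsilon=\eta+\delta\), which is \eqref{eq:slater-loss-eps}.

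The final sentence is bookkeeping. When \(c(\widetilde y)>0\), I substitute the formula for \(\lambda\) and note that \(c(\widetilde y)-c(s)=c(\widetilde y)+|c(s)|\). Hence \(\lambda\le c(\widetilde y)/|c(s)|\) and \(\lambda\to0\) as the infeasibility vanishes. When \(c(\widetilde y)\le0\), we have \(\lambda=0\) and the loss vanishes.

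I do not expect a real obstacle. The only step that needs care is that all points fed to the Lipschitz and norm bounds lie in \(B\), and the hypotheses give this directly. The other point to check is that \(R\) is applied to \(\norm{x-\widetilde y}\) rather than \(\norm{x-y}\), which the decomposition above arranges.
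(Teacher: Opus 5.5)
Your proposal is correct and follows the paper's argument. It uses the same decomposition \(\ip{T(z_1)-T(z_2)}{x-z_1}+\ip{T(z_2)}{z_2-z_1}\) with \(z_1=\widetilde y\), \(z_2=y\) and the identity \(\norm{\widetilde y-y}=\lambda\norm{\widetilde y-s}\); you additionally spell out the positive-part step and the bound \(\lambda\le c(\widetilde y)/|c(s)|\), both of which the paper leaves implicit.
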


\begin{proof}
Let \(\psi_x(z)=\ip{T(z)}{x-z}\).  For \(z_1,z_2\in B\) and \(x\in X\),
\[
  |\psi_x(z_1)-\psi_x(z_2)|
  \le |\ip{T(z_1)-T(z_2)}{x-z_1}|
       +|\ip{T(z_2)}{z_2-z_1}|
  \le (L_TR+M_T)\norm{z_1-z_2}.
\]
Taking \(z_1=\widetilde y\) and \(z_2=y\), and using
\(\norm{\widetilde y-y}=\lambda\norm{\widetilde y-s}\), gives
\eqref{eq:slater-loss-bound}.  The positive-part estimate and
\eqref{eq:slater-loss-eps} follow immediately.
\end{proof}
\begin{remark}
The relaxed-projection inner loop of \cite{BelloCruzDiazMillan2016}
can be used as a projection-free trial-point generator for FSPA. Starting from a forward
point, the inner loop produces a trial point \(\widetilde y^k\) by projecting only onto
separating half-spaces associated with the constraint. The Slater correction then maps
\(\widetilde y^k\) to a feasible point \(y^k\in C\), at which the operator is evaluated.
This gives a concrete way to generate feasible graph points without computing \(P_C\).

The inner loop alone does not certify Assumption~\ref{ass:oracle}, because that assumption
is a Minty-separation condition rather than only a feasibility condition. Nevertheless, when
combined with a finite-candidate selection rule that chooses the corrected point with the
largest computed Minty separation, it provides a practical realisation of the approximate
oracle framework used below.
\end{remark}
\subsection{Exact, certified, and finite-dimensional Minty oracles}

The abstract oracle in Assumption~\ref{ass:oracle} can be implemented in
several concrete ways.  We spell out these realisations because they are the
link between the convergence theorem and computable algorithms.

\begin{proposition}\label{prop:compactoracle}
Let \(\Hi=\R^n\), let \(C\) be compact and convex, and let
\(T:\R^n\to\R^n\) be continuous and monotone.  If, for each \(x^k\), the
oracle returns
\begin{equation}\label{eq:exactoracle}
  y^k\in\argmax_{y\in C}\ip{T(y)}{x^k-y},
  \qquad v^k=T(y^k),
\end{equation}
then Assumption~\ref{ass:oracle} holds with \(\sigma=1\) and
\(\varepsilon_k=0\), by taking \(\widetilde y^k=y^k\) so that the Slater
correction is inactive.  If \eqref{eq:exactoracle} is solved up to an additive
error \(\varepsilon_k\to0\), then Assumption~\ref{ass:oracle} holds with the
same \(\sigma=1\).
\end{proposition}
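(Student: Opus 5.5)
Since the statement of \Cref{prop:compactoracle} simply asks that every requirement of Assumption~\ref{ass:oracle} be met, I check them one at a time. First the Slater correction. Take \(\widetilde y^k=y^k\in C\). Then \(c(\widetilde y^k)\le0\), so \eqref{eq:lambdak} gives \(\lambda_k=0\), and \eqref{eq:slatercorrectedy} returns the same point \(y^k\). The graph point is \((y^k,v^k)=(y^k,T(y^k))\), which lies in \(\Gph(T)\) because \(T\) is single-valued with full domain \(\R^n\).

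The argmax in \eqref{eq:exactoracle} is nonempty. The map \(y\mapsto\ip{T(y)}{x^k-y}\) is continuous because \(T\) is continuous, and \(C\) is compact and nonempty, so Weierstrass applies. Boundedness is immediate: \(y^k\in C\), which is bounded, and \(v^k\in T(C)\), which is compact as the continuous image of a compact set. So \((y^k)\) and \((v^k)\) are bounded whatever the iterates do, and in particular whenever \((x^k)\) is bounded.

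Next the separation inequality \eqref{eq:oraclecondition}. For single-valued \(T\) the gap \eqref{eq:gap} becomes \(\Phi(x)=\sup_{z\in C}\ip{T(z)}{x-z}\). The exact oracle attains this supremum, so \(\ip{v^k}{x^k-y^k}=\Phi(x^k)\). Taking positive parts on both sides gives \(\pos{\ip{v^k}{x^k-y^k}}=\Phi_+(x^k)\), which is \eqref{eq:oraclecondition} with \(\sigma=1\) and \(\varepsilon_k=0\). In particular \(\Phi\) is finite at every point.

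For the inexact version, assume the oracle returns \(y^k\in C\) with \(\ip{T(y^k)}{x^k-y^k}\ge\Phi(x^k)-\varepsilon_k\). I then need to pass to positive parts. The map \(t\mapsto\pos{t}\) is nondecreasing and satisfies \(\pos{a-\varepsilon}\ge\pos{a}-\varepsilon\) for \(\varepsilon\ge0\). Applying this gives \(\pos{\ip{v^k}{x^k-y^k}}\ge\Phi_+(x^k)-\varepsilon_k\), which is the required bound.

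The one point needing care is how to read ``additive error'' when the approximate maximiser is allowed to be infeasible. If \(y^k\notin C\), I would treat it as a trial point \(\widetilde y^k\) and apply the Slater correction. The extra loss is then bounded by \Cref{prop:slaterloss}. That bound tends to zero provided the infeasibility \(c(\widetilde y^k)\) tends to zero, and it uses local Lipschitz continuity of \(T\). Since the proposition only assumes continuity, I would state the inexact case for feasible approximate maximisers, where it holds verbatim. Monotonicity of \(T\) is not used in this verification; it enters only through the other assumptions.
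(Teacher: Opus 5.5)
Your proof is correct and follows the same route as the paper. Weierstrass on the compact set \(C\) gives an exact maximiser, and the identity \(\Phi_+(x^k)=\pos{\ip{T(y^k)}{x^k-y^k}}\) then yields \eqref{eq:oraclecondition} with \(\sigma=1\) and \(\varepsilon_k=0\).

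You are more complete than the paper's proof, which says only that the approximate case follows immediately. Your version makes explicit three points the paper leaves implicit: the boundedness of \((y^k)\) and \((v^k)\), which you obtain from compactness of \(C\) and \(T(C)\); the inequality \(\pos{a-\varepsilon}\ge\pos{a}-\varepsilon\) used in the inexact case; and the tacit requirement that approximate maximisers be feasible.
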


\begin{proof}
The function \(y\mapsto\ip{T(y)}{x^k-y}\) is continuous on the compact set
\(C\), hence a maximiser exists.  The definition of \(\Phi\) gives
\begin{equation*}
  \Phi_+(x^k)
  =\max\left\{0,\sup_{y\in C}\ip{T(y)}{x^k-y}\right\}
  =\pos{\ip{T(y^k)}{x^k-y^k}},
\end{equation*}
for exact maximisers.  The approximate case follows immediately.
\end{proof}

\begin{proposition}\label{prop:finitenetoracle}
Let \(\Hi=\R^n\), let \(C\subset\R^n\) be compact and convex, and let
\(X\subset\R^n\) be a bounded set containing the iterates under consideration.
Since \(C\) is compact, the quantity \(R_{XC}\) below is finite.  Suppose that
\(T:C\to\R^n\) is single-valued and
Lipschitz continuous on \(C\), say
\[
  \norm{T(y)-T(z)}\le L_T\norm{y-z}
  \qquad \forall y,z\in C,
\]
and set
\[
  M_T:=\sup_{y\in C}\norm{T(y)},
  \qquad
  R_{XC}:=\sup\{\norm{x-y}:x\in X,\ y\in C\}.
\]
Let \(Y_k\subset C\) be finite sets satisfying
\[
  \sup_{y\in C}\dist(y,Y_k)\le \delta_k,
  \qquad \delta_k\to0.
\]
If
\begin{equation}\label{eq:finitenetoracle}
  y^k\in\argmax_{y\in Y_k}\ip{T(y)}{x^k-y},
  \qquad v^k=T(y^k),
\end{equation}
then Assumption~\ref{ass:oracle} holds with \(\sigma=1\) and
\begin{equation}\label{eq:finiteepsilon}
  \varepsilon_k=(L_T R_{XC}+M_T)\delta_k.
\end{equation}
\end{proposition}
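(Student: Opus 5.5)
The proof reduces to a Lipschitz estimate for the Minty-gap integrand, applied at a maximiser over \(C\) and its nearest point in the finite net \(Y_k\).

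\textbf{Setup and the easy conditions.} Since \(T\) is continuous, \(\Phi\) restricted to \(C\) is
\[
  \Phi(x)=\sup_{y\in C}\psi_x(y),\qquad \psi_x(y):=\ip{T(y)}{x-y}.
\]
Because \(Y_k\subset C\), the selected point \(y^k\) is feasible. We take \(\widetilde y^k=y^k\), so \(\lambda_k=0\) in \eqref{eq:lambdak} and the Slater correction is inactive. Boundedness of \((y^k)\) follows from compactness of \(C\), and boundedness of \((v^k)\) from \(\norm{v^k}\le M_T\). It remains to verify \eqref{eq:oraclecondition} with \(\sigma=1\) and \(\varepsilon_k\) as in \eqref{eq:finiteepsilon}.

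\textbf{Lipschitz estimate.} I would reuse the argument from the proof of \Cref{prop:slaterloss}. For \(x\in X\) and \(z_1,z_2\in C\),
\[
  |\psi_x(z_1)-\psi_x(z_2)|
  \le|\ip{T(z_1)-T(z_2)}{x-z_1}|+|\ip{T(z_2)}{z_2-z_1}|
  \le (L_TR_{XC}+M_T)\norm{z_1-z_2}.
\]
This uses \(\norm{x-z_1}\le R_{XC}\) and \(\norm{T(z_2)}\le M_T\).

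\textbf{Comparing the net maximum with \(\Phi\).} By continuity of \(\psi_{x^k}\) on the compact set \(C\), there is a maximiser \(y_*\in C\) with \(\psi_{x^k}(y_*)=\Phi(x^k)\). Since \(Y_k\) is finite, pick \(\hat y\in Y_k\) with \(\norm{\hat y-y_*}\le\delta_k\). Then
\[
  \ip{v^k}{x^k-y^k}=\psi_{x^k}(y^k)\ge\psi_{x^k}(\hat y)\ge\Phi(x^k)-(L_TR_{XC}+M_T)\delta_k .
\]
Passing to positive parts, using \([a]_+\ge a\) and \([a]_+\ge 0\), gives
\[
  \pos{\ip{v^k}{x^k-y^k}}\ge \max\{\Phi(x^k),0\}-\varepsilon_k=\Phi_+(x^k)-\varepsilon_k .
\]
Finally, \(\varepsilon_k\to0\) because \(\delta_k\to0\).

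\textbf{Main point of care.} The step needing the most care is the Lipschitz bound. It needs \(R_{XC}\) to control \(\norm{x^k-y}\) uniformly over \(y\in C\), which requires \(x^k\in X\); this is why \(X\) must be a bounded set containing the iterates. It also needs the bound on \(T\) only over \(C\), which is why \(M_T\) is taken as \(\sup_{y\in C}\norm{T(y)}\).
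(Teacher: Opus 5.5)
Your proposal is correct and follows essentially the same route as the paper. Both arguments use the uniform Lipschitz bound \((L_TR_{XC}+M_T)\) for \(\psi_x\) on \(C\), compare a maximiser over \(C\) with a net point within \(\delta_k\) of it, and then pass to positive parts. You also check explicitly the remaining parts of Assumption~\ref{ass:oracle}: the Slater correction is inactive, \((y^k)\) and \((v^k)\) are bounded, and \(\varepsilon_k\to0\). The paper's proof leaves these points implicit.
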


\begin{proof}
For fixed \(x\in X\), define
\[
  \psi_x(y):=\ip{T(y)}{x-y},\qquad y\in C.
\]
For \(y,z\in C\),
\begin{align*}
 |\psi_x(y)-\psi_x(z)|
 &\le |\ip{T(y)-T(z)}{x-y}|+|\ip{T(z)}{z-y}| \\
 &\le (L_T R_{XC}+M_T)\norm{y-z}.
\end{align*}
Thus \(\psi_x\) is Lipschitz on \(C\), uniformly for \(x\in X\).  Let
\(\bar y\in C\) be a maximiser of \(\psi_{x^k}\) over \(C\), and choose
\(z_k\in Y_k\) with \(\norm{z_k-\bar y}\le\delta_k\).  Then
\[
  \sup_{y\in C}\psi_{x^k}(y)
  =\psi_{x^k}(\bar y)
  \le \psi_{x^k}(z_k)+(L_T R_{XC}+M_T)\delta_k
  \le \psi_{x^k}(y^k)+(L_T R_{XC}+M_T)\delta_k.
\]
Taking positive parts gives
\[
  \Phi_+(x^k)\le
  \pos{\ip{T(y^k)}{x^k-y^k}}+
  (L_T R_{XC}+M_T)\delta_k,
\]
which is exactly \eqref{eq:oraclecondition} with \(\sigma=1\) and
\(\varepsilon_k\) given by \eqref{eq:finiteepsilon}.
\end{proof}

\begin{remark}
\Cref{prop:finitenetoracle} replaces the black-box oracle by a concrete
finite-dimensional mechanism.  The candidate set \(Y_k\) may be formed from
problem structure: Slater-corrected forward points, previous feasible graph
points, active-constraint cuts, random feasible samples, coarse mesh points, or
solutions of inexpensive low-dimensional subproblems.  The theorem shows that
as long as these candidates approximate \(C\) with accuracy \(\delta_k\to0\),
the separation error in the main convergence theorem is explicit.  In practice,
one often uses a small adaptive candidate pool rather than a global net; the
result above gives a rigorous benchmark for such implementations.
\end{remark}

\begin{proposition}\label{prop:affineoracle}
Let \(C\subset\R^n\) be nonempty, closed and convex, and let \(T(y)=My+q\),
where \(M\in\R^{n\times n}\), \(q\in\R^n\), and
\[
  \frac12(M+M^\top)\succeq0.
\]
Then \(T\) is monotone.  For each fixed \(x\), the exact Minty oracle problem
\[
  \max_{y\in C}\ip{My+q}{x-y}
\]
is equivalent, up to the constant \(q^\top x\), to the convex optimisation
problem
\begin{equation}\label{eq:affineoraclegeneral}
  \min_{y\in C}
  \left\{
  \frac12 y^\top(M+M^\top)y+(q-M^\top x)^\top y
  \right\}.
\end{equation}
In particular, if \(M=A^\top A+S\) with \(S^\top=-S\), then
\eqref{eq:affineoraclegeneral} becomes
\begin{equation}\label{eq:affineoracleskew}
  \min_{y\in C}
  \left\{
  y^\top A^\top A y+\big(q-(A^\top A-S)x\big)^\top y
  \right\}.
\end{equation}
\end{proposition}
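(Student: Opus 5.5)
The proposition has three parts, and I would handle each by direct computation.

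\emph{Monotonicity.} For \(y,z\in\R^n\),
\[
\ip{T(y)-T(z)}{y-z}=(y-z)^\top M(y-z).
\]
Since a quadratic form only sees the symmetric part of its matrix, this equals \((y-z)^\top\tfrac12(M+M^\top)(y-z)\), which is nonnegative by the assumption \(\tfrac12(M+M^\top)\succeq0\).

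\emph{Reduction of the oracle problem.} Fix \(x\) and expand the objective:
\[
\ip{My+q}{x-y}=y^\top M^\top x+q^\top x-y^\top M y-q^\top y .
\]
Use the identity \(y^\top M y=\tfrac12 y^\top(M+M^\top)y\) and collect the linear terms. This gives
\[
\ip{My+q}{x-y}=q^\top x-\Bigl[\tfrac12 y^\top(M+M^\top)y+(q-M^\top x)^\top y\Bigr].
\]
So maximising the left side over \(y\in C\) is the same as minimising the bracket over \(C\), with the same maximisers and optimal values differing by the constant \(q^\top x\). This is \eqref{eq:affineoraclegeneral}. The bracket is a convex quadratic because its Hessian \(M+M^\top\) is positive semidefinite, and \(C\) is closed and convex, so \eqref{eq:affineoraclegeneral} is a convex optimisation problem.

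\emph{Special case.} Take \(M=A^\top A+S\) with \(S^\top=-S\). Then \(M+M^\top=2A^\top A\), so the quadratic term becomes \(y^\top A^\top A y\). Also \(M^\top=A^\top A+S^\top=A^\top A-S\), so the linear coefficient becomes \(q-(A^\top A-S)x\). Substituting both into \eqref{eq:affineoraclegeneral} gives \eqref{eq:affineoracleskew}.

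There is no real obstacle; the only point needing care is the sign of the transpose when moving \(M\) onto \(x\), that is, writing \(x^\top M y=(M^\top x)^\top y\). This sign is what produces \(A^\top A-S\) rather than \(A^\top A+S\) in \eqref{eq:affineoracleskew}.
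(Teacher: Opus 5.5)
Your proposal is correct and matches the paper's proof essentially line by line: the same symmetric-part argument for monotonicity, the same expansion of \(\ip{My+q}{x-y}\) into \(q^\top x\) minus the bracketed quadratic, and the same substitutions \(M+M^\top=2A^\top A\) and \(M^\top=A^\top A-S\) for the skew case. Your explicit note that the bracket has positive semidefinite Hessian \(M+M^\top\), which makes \eqref{eq:affineoraclegeneral} a convex problem, fills in a justification the paper leaves implicit.
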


\begin{proof}
Monotonicity follows because, for all \(y,z\in\R^n\),
\[
  \ip{My-Mz}{y-z}
  =(y-z)^\top\frac12(M+M^\top)(y-z)\ge0.
\]
Moreover,
\begin{align*}
  \ip{My+q}{x-y}
  &=y^\top M^\top x+q^\top x-y^\top My-q^\top y \\
  &=q^\top x-
  \left[
    y^\top\frac12(M+M^\top)y+(q-M^\top x)^\top y
  \right],
\end{align*}
where we used \(y^\top My=y^\top\frac12(M+M^\top)y\).  Therefore maximising
\(\ip{My+q}{x-y}\) over \(C\) is equivalent to solving
\eqref{eq:affineoraclegeneral}.  The skew-part formula follows from
\(M=A^\top A+S\) and \(M^\top=A^\top A-S\).
\end{proof}

\begin{remark}
The quantity
\[
  r_k:=\pos{\ip{v^k}{x^k-y^k}}
\]
is computed by the algorithm.  Therefore, if an implementation also provides an
upper bound \(U_k\ge \Phi_+(x^k)\), then the condition
\[
  r_k\ge \sigma U_k-\varepsilon_k
\]
certifies Assumption~\ref{ass:oracle} at iteration \(k\).  Such upper bounds
can come from solving the exact oracle problem to a certified duality gap,
from the finite-net estimate in \Cref{prop:finitenetoracle}, or from a problem
specific relaxation.  This gives a practical way to monitor the quality of the
Minty half-space without computing the metric projection \(P_C(x^k)\).
\end{remark}

\begin{remark}\label{rem:oraclecost}
The oracle is not claimed to be free.  In full generality, recovering a fixed
fraction of the Minty gap over \(C\) may be as difficult as solving a global
auxiliary variational-inequality problem on \(C\).  The contribution of the
oracle formulation is to separate this issue from the metric projection
\(P_C\): convergence requires feasible Minty separation, not projection of the
current iterate onto \(C\).  The trade-off is favourable when feasible graph
points can be generated from problem structure, when a small adaptive candidate
pool gives a good separation, or when the Minty oracle reduces to a tractable
low-dimensional or affine problem.  It is less favourable when the only available
way to find such a graph point is a global search over a high-dimensional
feasible set.  Thus the precise claim is that FSPA is projection-free with
respect to the original feasible set \(C\), while still relying on an oracle
whose cost must be assessed for the application at hand.
\end{remark}

\section{Positioning with respect to related methods}\label{sec:positioning}

The following table summarises the main structural differences between the
FSPA and some closely related projection, relaxed-projection and
subgradient-extragradient approaches.  The purpose is not to claim that one
framework dominates the others, but to make precise what is new in the present
formulation: the combination of Haugazeau best-approximation geometry, Minty
separation at feasible graph points, Slater correction for feasibility, and a
strong convergence conclusion for point-to-set maximal monotone operators.

\paragraph{Clarification on the projection-free terminology and conditional gradients.}
In the convex optimisation literature, the term \emph{projection-free} is most
frequently associated with the Frank--Wolfe, or conditional-gradient, algorithm
and its variants \cite{FrankWolfe1956,Jaggi2013}.  Those methods avoid the
metric projection \(P_C\) by relying instead on a linear minimisation oracle
over \(C\).  They are highly effective for structured bounded sets for which
this oracle is inexpensive, such as polytopes, spectrahedra or norm balls.
Classical Frank--Wolfe frameworks, however, typically rely on smoothness of the
objective or operator model and on a bounded feasible set.

FSPA operates in a different regime.  It avoids \(P_C\) by using subgradient
cuts of the defining nonsmooth constraint \(c\) and by enforcing primal
feasibility of the operator-evaluation point through the Slater correction.
Thus FSPA is designed for unbounded feasible sets, nonsmooth convex inequalities
and point-to-set maximal monotone operators.  In such settings, a linear
minimisation oracle over \(C\) may be unavailable or as costly as the metric
projection itself.  The phrase projection-free therefore means here that no
metric projection onto the original feasible set \(C\) is used; the method
still performs projections onto explicitly constructed half-spaces.

The comparison with the projection method of Solodov and Svaiter
\cite{SolodovSvaiter1999} is particularly important.  Their method is a strong
separation framework for monotone variational inequalities and is one of the
closest predecessors of the present work.  The differences are that FSPA is
formulated directly for point-to-set maximal monotone operators, treats feasible
sets described by a nonsmooth convex inequality, and replaces metric projection
onto \(C\) by a Slater-corrected feasible graph point together with the
Haugazeau half-space anchored at \(x^0\).  Thus the novelty is not simply the
use of a separating hyperplane, but the combination of feasible Minty separation,
Slater correction and best-approximation geometry without computing \(P_C\).

\begin{table}[htbp]
\centering
\caption{Comparison with related variational-inequality projection methods.}
\label{tab:comparison}
\small
\renewcommand{\arraystretch}{1.12}
\begin{tabularx}{\textwidth}{>{\raggedright\arraybackslash}p{2.9cm}>{\raggedright\arraybackslash}X>{\raggedright\arraybackslash}X>{\raggedright\arraybackslash}X>{\raggedright\arraybackslash}X}
\toprule
Method & Projection onto \(C\) & Operator model & Constraint model & Convergence feature \\
\midrule
Fukushima relaxed projection \cite{Fukushima1986}
& Avoided/replaced by relaxed projections in structured settings
& Mainly single-valued VI setting
& Usually explicitly given convex sets
& Weak or subsequential convergence depending on assumptions \\
\addlinespace
Solodov--Svaiter projection method \cite{SolodovSvaiter1999}
& Uses projection and separating hyperplanes associated with the feasible set
& Monotone VI setting; not formulated around Slater-corrected point-to-set graph evaluations
& Closed convex feasible set
& Strong convergence/separation framework, but with a different projection mechanism \\
\addlinespace
Subgradient extragradient method \cite{CensorGibaliReich2011}
& Replaces one projection by a subgradient half-space
& Single-valued monotone, typically Lipschitz-type assumptions
& Convex set with computable supporting half-spaces
& Weak convergence in Hilbert spaces \\
\addlinespace
Relaxed-projection splitting \cite{BelloCruzDiazMillan2016}
& Avoids direct projections onto difficult sets by relaxed projections
& Monotone inclusion/VI framework
& Convex inequality constraints
& Weak convergence under relaxed-projection assumptions \\
\addlinespace
Multivalued VI projection and cutting methods
\cite{AnhKuno2012,AnhMuuStrodiot2009,AnhThangThach2021,BaoKhanh2005,BurachikDiazMillan2020,DongLuYangHe2017,FangHe2013}
& Usually require projections onto the feasible set or onto an approximation of it
& Multivalued monotone, pseudomonotone, or generalised monotone operators
& Closed convex feasible sets, often with compactness or continuity hypotheses
& Convergence of projection, cutting-plane, or projection--contraction schemes \\
\addlinespace
FSPA
& No metric projection onto \(C\); only half-space projections
& Point-to-set maximal monotone operator
& Nonsmooth convex inequality with Slater correction
& Strong convergence to \(P_{S^*}(x^0)\) under Minty-separation oracle \\
\bottomrule
\end{tabularx}
\end{table}
\clearpage

\section{Numerical experiments and comparisons}\label{sec:numerics}

We now test a practical implementation of \Cref{alg:main} on problems in which
metric projection onto the original feasible set is either expensive or not
available in closed form.  The projection onto the intersection
\(C_k\cap H_k\cap W_k\) is always a projection onto at most three half-spaces,
computed by the active-set enumeration formula in \eqref{eq:smallqp}.  No
metric projection onto \(C\) is used by FSPA.

\paragraph{Practical implementation and the oracle assumption.}
The convergence theorem applies to the oracle algorithm satisfying
Assumption~\ref{ass:oracle}.  The high-dimensional experiments below use a
cheaper forward single-candidate implementation: the trial point is
\[
  \widetilde y^k=x^k-\alpha_k F(x^k),
  \qquad \alpha_k=\alpha_0(k+1)^{-\beta},
\]
where \(F\) is the single-valued affine or smooth representative of the
operator.  If \(\widetilde y^k\notin C\), it is moved toward the fixed Slater
point \(s\) to obtain a feasible point \(y^k\in C\), and the separating
half-space is then generated from \(F(y^k)\).  This implementation preserves the
projection-free and feasible-separation geometry of FSPA, but it is not claimed
to satisfy Assumption~\ref{ass:oracle} automatically.  A certified
implementation would choose \(y^k\) from one of the mechanisms in
\Cref{sec:oracle}, for instance the finite-candidate oracle of
\Cref{prop:finitenetoracle}.  Thus the numerical section should be read as an
investigation of a practical FSPA variant, not as a verification of
\Cref{thm:mainstrong} for every tested instance.

The reported feasibility measure is
\[
  c_+(x^k):=\max\{c(x^k),0\},
\]
and the reported residual is \(\|x^{k+1}-x^k\|\).  For examples where a true
solution is known, we also report the distance to the solution.  All experiments
use one fixed random seed and the same implementation for all methods.  The CPU
times are indicative wall-clock times from a vectorised Python/NumPy
implementation on a standard desktop environment.  The code used to generate the
figures and tables is included in the supplementary package, so the numerical
claims can be reproduced or modified by changing the parameter blocks in the
scripts.

\subsection{Benchmark methods}

We compare against two complementary baselines.  First, when the metric
projection onto \(C\) is computationally feasible, we use the subgradient
extragradient method of Censor--Gibali--Reich \cite{CensorGibaliReich2011}.  In
the max-quadratic example this means that each projection step solves a QCQP;
for the small test this projection is computed by SLSQP.  For the large
max-quadratic test we do not run this exact-projection method, because the
projection problem is already a large QCQP at every iteration.

Second, we use a relaxed subgradient-extragradient baseline, inspired by
relaxed-projection ideas such as \cite{BelloCruzDiazMillan2016,Fukushima1986},
in which the projection onto \(C\) is replaced by one supporting half-space
projection.  This second baseline is inexpensive and, after tuning the
step-size sequence, can be competitive on some examples.  It does not, however,
include the Haugazeau best-approximation step or the Slater-corrected feasible
operator evaluation used by FSPA.  To avoid a misleading
comparison, the large max-quadratic test reports both the default relaxed
baseline and a retuned relaxed baseline with a smaller step-size sequence.

\subsection{Test problems}

\paragraph{Max of convex quadratics.}
The first problem is designed to stress projection-based methods.  The feasible
set is given by the nonsmooth max-function
\begin{equation}\label{eq:maxquad-c}
  C=\{x\in\R^n:c(x)\le0\},\qquad
  c(x)=\max_{1\le i\le m}\left\{\frac12 x^\top Q_i x+a_i^\top x+b_i\right\},
\end{equation}
where each \(Q_i\) is symmetric positive semidefinite.  The operator is the
nonsymmetric affine map
\begin{equation}\label{eq:maxquad-operator}
  F(x)=Mx+q,\qquad M=A^\top A+S,
\end{equation}
where \(S^\top=-S\).  The skew part makes the problem a genuine variational
inequality rather than simply the optimality condition of a convex
minimisation problem.  Projection onto \(C\) requires solving a QCQP with \(m\)
quadratic constraints, while FSPA uses only one active quadratic
cut plus the Minty and Haugazeau half-spaces.

\paragraph{Discretised optimal control.}
The second test comes from a Hilbert-space model.  Let the control variable
belong to \(L^2(0,1)\), and define the Volterra state map
\[
  (Ku)(t)=\int_0^t u(s)\,ds .
\]
A typical monotone optimal-control operator is
\begin{equation}\label{eq:oc-operator}
  F(u)=\rho u+K^*(Ku-y_d)+\gamma Su,
\end{equation}
where \(\rho>0\), \(S\) is a skew-symmetric discretisation and \(y_d\) is a
desired state.  The nonsmooth state constraint is
\begin{equation}\label{eq:oc-constraint}
  c(u)=\|Ku\|_\infty-U\le0.
\end{equation}
The Slater point is \(s=0\), with \(c(0)=-U<0\).  After discretisation with
\(N=10000\) mesh points, the problem is a large finite-dimensional VI inherited
from the Hilbert-space formulation.

\paragraph{Sparse recovery with mixed norms.}
The third test uses the high-dimensional mixed-norm constraint
\begin{equation}\label{eq:mixed-constraint}
  c(x)=\lambda_1\|x\|_1+\lambda_\infty\|x\|_\infty-\tau\le0.
\end{equation}
The operator is a least-squares gradient perturbed by a skew-symmetric term,
\begin{equation}\label{eq:mixed-operator}
  F(x)=\frac1p A^\top(Ax-b)+\mu x+Sx,
\end{equation}
with \(\mu>0\) and \(S^\top=-S\).  This example is included as a robustness test
on a nonsmooth high-dimensional geometry; it is not expected to be the most
favourable case for FSPA.

\paragraph{Cournot--Nash equilibrium with shared capacity.}
The fourth test is an equilibrium model with \(N_f\) firms and \(N_m\) markets.
The decision variable is \(x=(x_{ij})\in\mathbb R^{N_fN_m}\), where \(x_{ij}\)
denotes the production of firm \(i\) in market \(j\).  For each market, set
\[
  Q_j(x):=\sum_{i=1}^{N_f}x_{ij}.
\]
We use the affine Cournot--Nash operator
\begin{equation}\label{eq:cournot-operator}
  F_{ij}(x)
  = d_{ij}+h_{ij}x_{ij}-a_j+b_jQ_j(x)+b_jx_{ij}+(Sx)_{ij},
  \qquad i=1,\ldots,N_f,
  \quad j=1,\ldots,N_m,
\end{equation}
where \(a_j,b_j>0\) define the linear inverse demand
\(p_j(Q)=a_j-b_jQ\), \(d_{ij}\) and \(h_{ij}>0\) are the cost parameters, and
\(S^\top=-S\) is an additional skew-symmetric coupling.  The term
\(d_{ij}+h_{ij}x_{ij}-a_j+b_jQ_j(x)+b_jx_{ij}\) is the marginal cost minus the
marginal revenue of firm \(i\) in market \(j\), and the skew part preserves
monotonicity of the symmetric component while making the operator nonsymmetric.
The shared nonsmooth capacity constraint is
\begin{equation}\label{eq:cournot-constraint}
  c(x)=\max\left\{\max_{i,j}(-x_{ij}),\;\max_{1\le r\le R}\big((Bx)_r-d_r\big)\right\}\le0,
\end{equation}
where \(x\ge0\) represents nonnegative production and \(Bx\le d\) represents
coupled resource or emission limits.  The Slater point is chosen as a strictly
positive production vector scaled so that \(Bs<d\).  The test uses \(60\) firms,
\(15\) markets and \(25\) shared resource constraints, so \(n=900\).

\paragraph{Point-to-set \(\ell_1\)-subdifferential example.}
The fifth test is included to show that the point-to-set formulation is used in
the computations, and not only in the abstract theory.  We consider
\begin{equation}\label{eq:pointset-operator}
  T(x)=\mu x+\lambda\partial\norm{x}_1,\qquad \mu>0,
  \quad \lambda>0,
\end{equation}
with the polyhedral inequality
\begin{equation}\label{eq:pointset-constraint}
  c(x)=\max_{1\le r\le m}(a_r^\top x-b_r)\le0,\qquad b_r>0.
\end{equation}
The Slater point is \(s=0\), and \(0\in C\) and \(0\in T(0)\), so the
unique solution is \(x^*=0\).  At an operator evaluation point \(y\), the
implementation uses the selection
\[
  v=\mu y+\lambda\xi, \qquad
  \xi_i\in\partial |y_i|,
\]
with \(\xi_i=\operatorname{sign}(y_i)\) when \(y_i\ne0\), and
\(\xi_i=0\) when \(y_i=0\).  This gives a concrete maximal monotone
point-to-set test for the half-space construction.

\subsection{Results}

\Cref{tab:numerical-summary} summarises the main results.  The table is intended
as a balanced comparison rather than as evidence of universal superiority.  On
the small QCQP instance, where exact metric projections are computationally
available, projection-based and relaxed methods are competitive.  On the mixed
\(\ell_1/\ell_\infty\) example, the relaxed baseline is slightly better in the
final residual.  In the point-to-set \(\ell_1\)-subdifferential test, the final
step residual alone is misleading: the relaxed baseline has a smaller final step
but remains much farther from the known solution \(x^*=0\).  The advantage of
FSPA appears most clearly in large or coupled nonsmooth constraints for which
\(P_C\) is expensive or unavailable in closed form: the large max-quadratic,
Cournot--Nash and point-to-set distance-to-solution tests.

The SEG--SLSQP line in \Cref{tab:numerical-summary} is included only for the
small QCQP instance.  Its five iterations correspond to the stopping criterion
used in that run with an inner SLSQP projection solver; this should not be read
as a fixed-budget comparison with the forty iterations of the half-space
methods.  Rather, it illustrates that exact-projection methods can work well on
small instances but require solving a constrained projection problem at each
iteration.

\begin{table}[H]
\centering
\caption{Numerical comparison on nonsmooth and large-scale constraints.  SEG
means the subgradient extragradient method with a metric projection onto \(C\),
computed by SLSQP only for the small QCQP test.  The relaxed subgradient--EG
baseline replaces \(P_C\) by one supporting half-space projection.  The retuned
relaxed line uses a smaller step-size sequence and is included to give a fairer
comparison.}
\label{tab:numerical-summary}
\small
\resizebox{\textwidth}{!}{%
\begin{tabular}{llrrrrr}
\toprule
Problem & Method & \(n\) & Iter. & CPU (s) & final \(c_+\) & final step \\
\midrule
Max quadratics, small QCQP
& FSPA & 15 & 40 & 0.0035 & 0 & \(5.71\times10^{-3}\) \\
& Relaxed subgradient--EG & 15 & 40 & 0.0010 & 0 & \(2.39\times10^{-6}\) \\
& SEG with SLSQP \(P_C\) & 15 & 5 & 0.0006 & 0 & \(3.90\times10^{-2}\) \\
\addlinespace
Max quadratics, \(n=5000,m=100\)
& FSPA & 5000 & 80 & 0.0239 & 0 & \(6.32\times10^{-4}\) \\
& Relaxed subgradient--EG, default & 5000 & 80 & 0.0130 & 0.271 & 0.252 \\
& Relaxed subgradient--EG, retuned & 5000 & 80 & 0.0091 & 0 & \(1.52\times10^{-3}\) \\
\addlinespace
Optimal control, \(N=10000\)
& FSPA & 10000 & 80 & 0.0312 & 0 & \(3.95\times10^{-2}\) \\
& Relaxed subgradient--EG & 10000 & 80 & 0.0205 & 0 & \(7.72\times10^{-2}\) \\
\addlinespace
Mixed \(\ell_1/\ell_\infty\), \(n=1500\)
& FSPA & 1500 & 80 & 0.0151 & 0 & \(4.46\times10^{-3}\) \\
& Relaxed subgradient--EG & 1500 & 80 & 0.0087 & 0 & \(4.01\times10^{-3}\) \\
\addlinespace
Cournot--Nash shared capacity
& FSPA & 900 & 80 & 0.0109 & 0 & \(5.48\times10^{-5}\) \\
& Relaxed subgradient--EG & 900 & 80 & 0.0056 & 3.792 & 3.340 \\
\addlinespace
Point-to-set \(\ell_1\) subdifferential
& FSPA & 1200 & 5000 & 0.814 & 0 & \(2.06\times10^{-2}\) \\
& Relaxed subgradient--EG & 1200 & 5000 & 0.258 & 0 & \(1.83\times10^{-4}\) \\
\bottomrule
\end{tabular}%
}
\end{table}

\begin{figure}[H]
\centering
\begin{minipage}{0.48\textwidth}
\centering
\includegraphics[width=\textwidth]{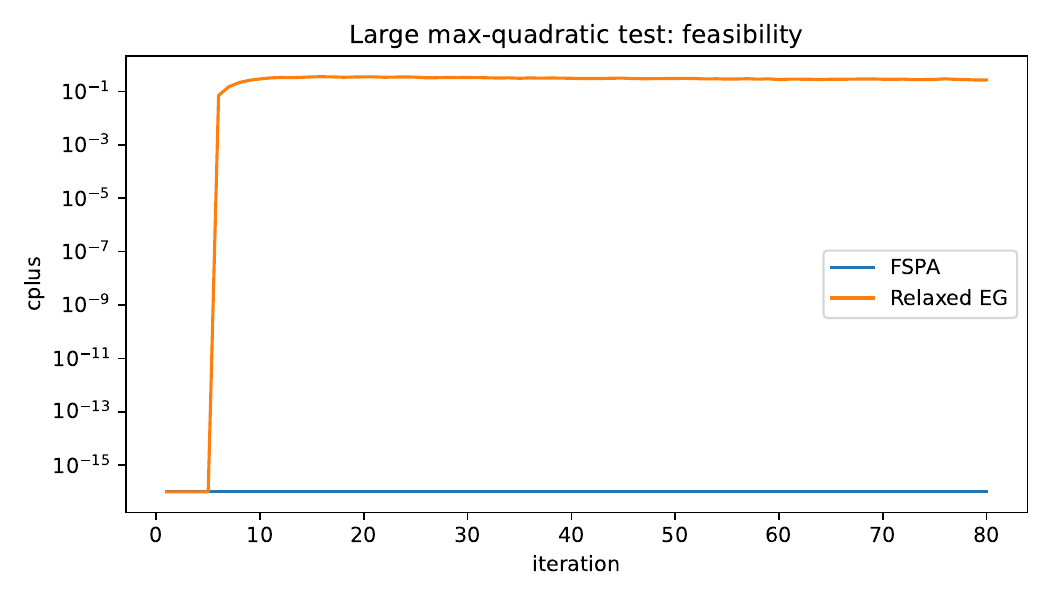}
\end{minipage}\hfill
\begin{minipage}{0.48\textwidth}
\centering
\includegraphics[width=\textwidth]{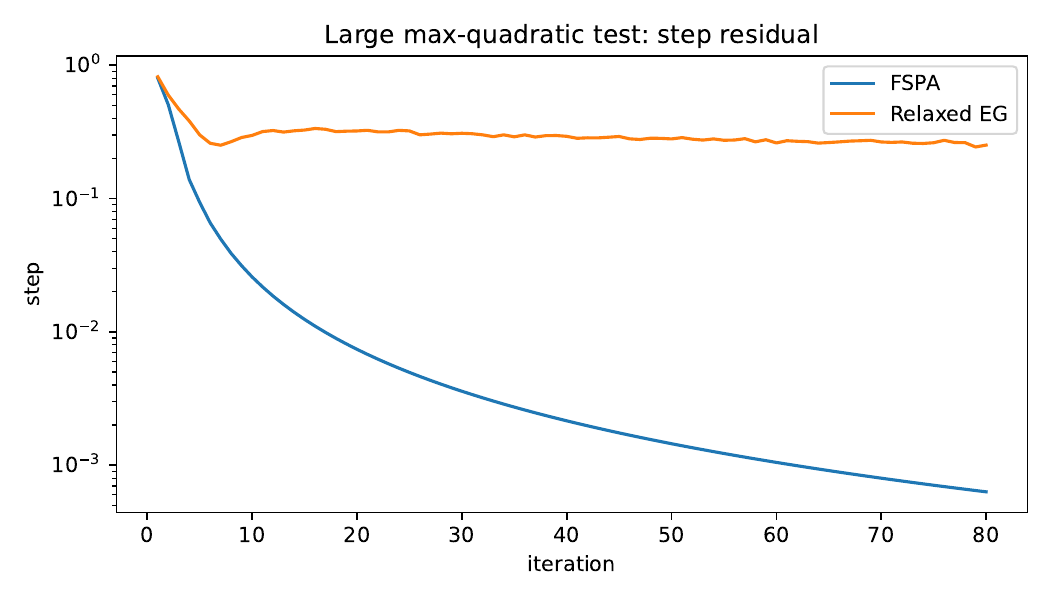}
\end{minipage}

\vspace{0.4cm}

\begin{minipage}{0.48\textwidth}
\centering
\includegraphics[width=\textwidth]{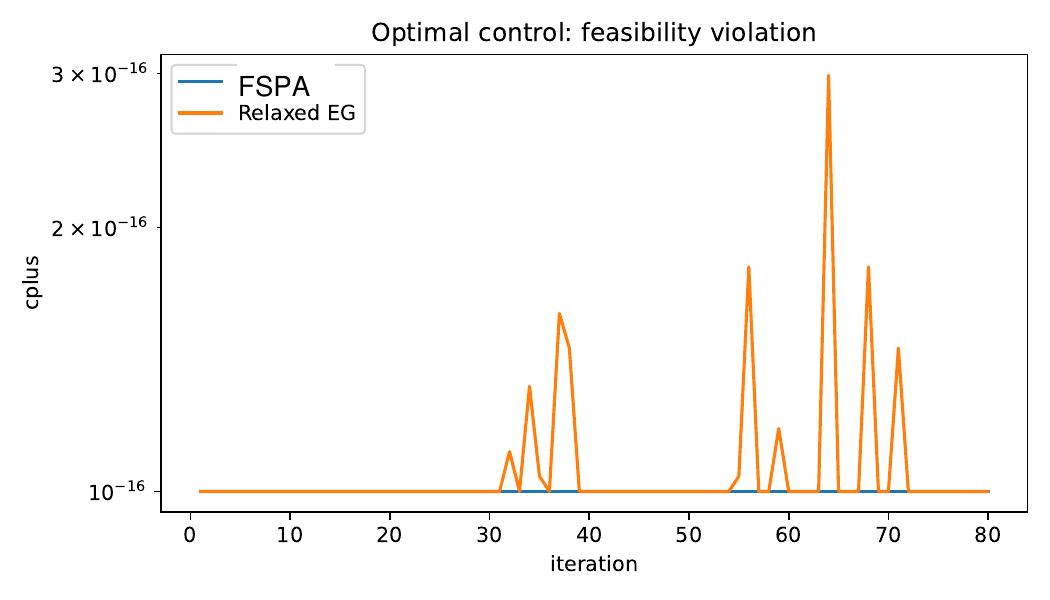}
\end{minipage}\hfill
\begin{minipage}{0.48\textwidth}
\centering
\includegraphics[width=\textwidth]{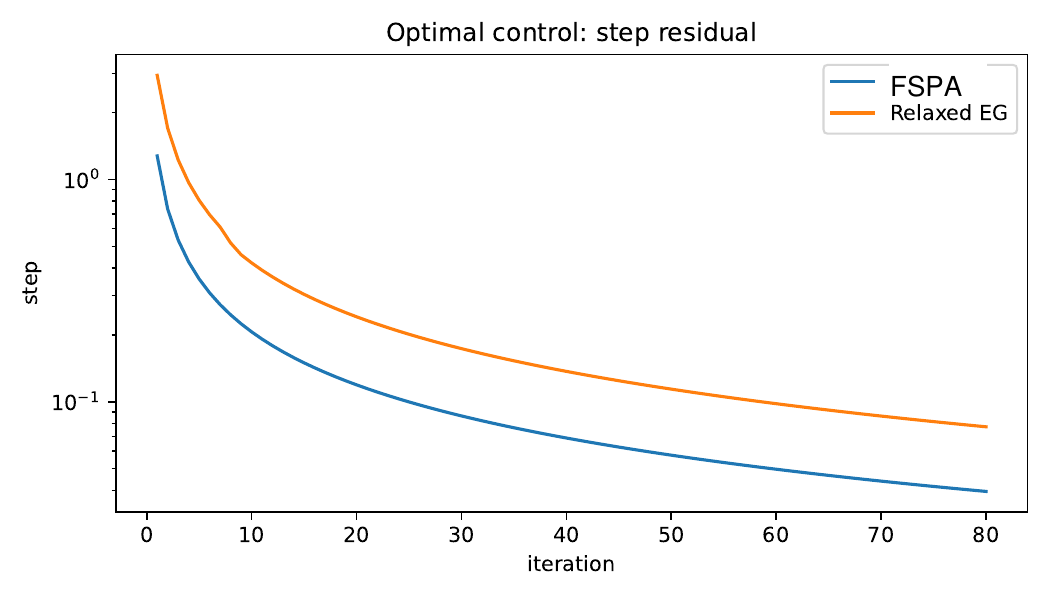}
\end{minipage}
\caption{Representative histories for the large max-quadratic and optimal-control tests.  The plotted relaxed curve is the default relaxed baseline.  FSPA reaches feasibility in the large max-quadratic instance, while the default relaxed baseline remains infeasible; the retuned relaxed baseline is reported separately in \Cref{tab:numerical-summary,tab:maxquad-sweep}.  In the optimal-control test both plotted methods finish feasible, and FSPA has the smaller final step residual in this run.  Zero feasibility values are displayed at \(10^{-16}\) only to make the logarithmic scale readable.}
\label{fig:numerical-main}
\end{figure}

\begin{figure}[H]
\centering
\begin{minipage}{0.48\textwidth}
\centering
\includegraphics[width=\textwidth]{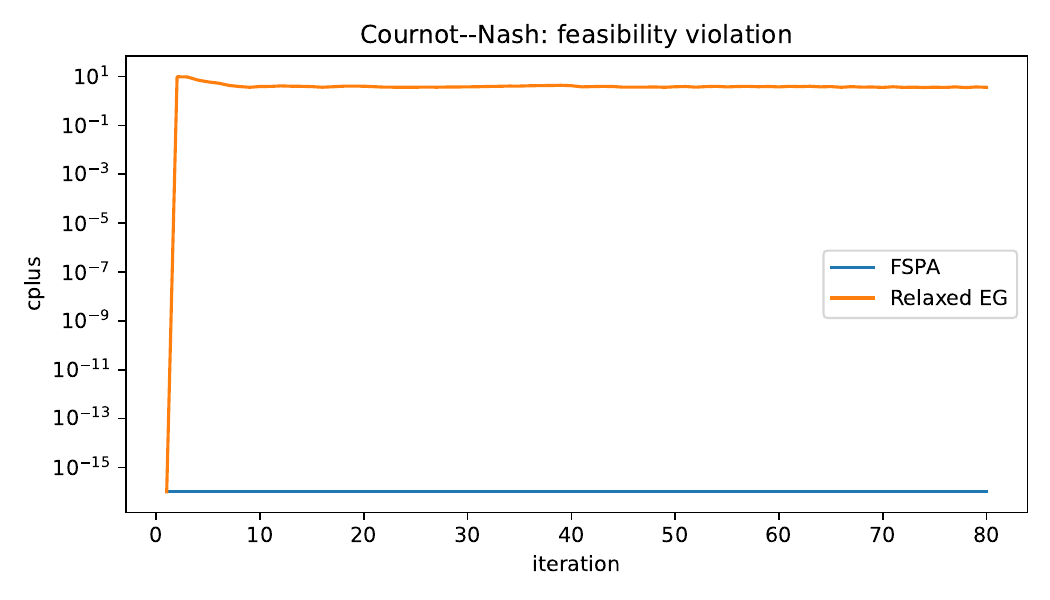}
\end{minipage}\hfill
\begin{minipage}{0.48\textwidth}
\centering
\includegraphics[width=\textwidth]{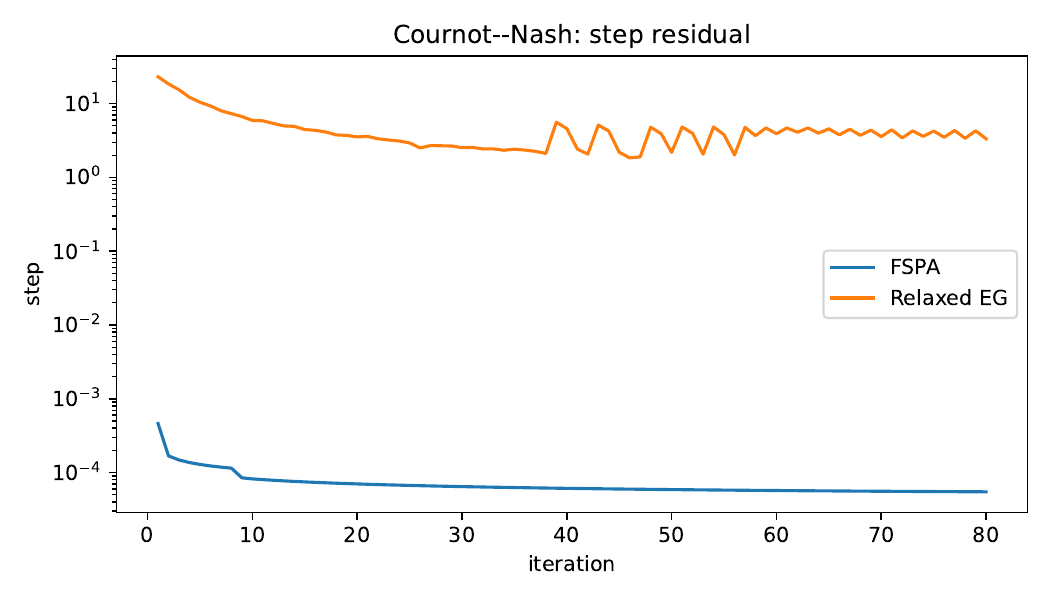}
\end{minipage}
\caption{Cournot--Nash shared-capacity test.  FSPA reaches feasibility and has a small final residual, while the relaxed baseline remains infeasible for the same iteration budget.}
\label{fig:cournot}
\end{figure}

\begin{table}[H]
\centering
\caption{Point-to-set \(\ell_1\)-subdifferential test with known solution
\(x^*=0\).  The relaxed baseline has a smaller final step residual, but its
iterate remains far from the true solution.}
\label{tab:pointset}
\small
\begin{tabular}{lrrrr}
\toprule
Method & Iter. & final \(c_+\) & final step & \(\norm{x^K-x^*}\) \\
\midrule
FSPA & 5000 & 0 & \(2.06\times10^{-2}\) & \(2.29\times10^{-1}\) \\
Relaxed subgradient--EG & 5000 & 0 & \(1.83\times10^{-4}\) & 5.02 \\
\bottomrule
\end{tabular}
\end{table}

\begin{figure}[H]
\centering
\begin{minipage}{0.48\textwidth}
\centering
\includegraphics[width=\textwidth]{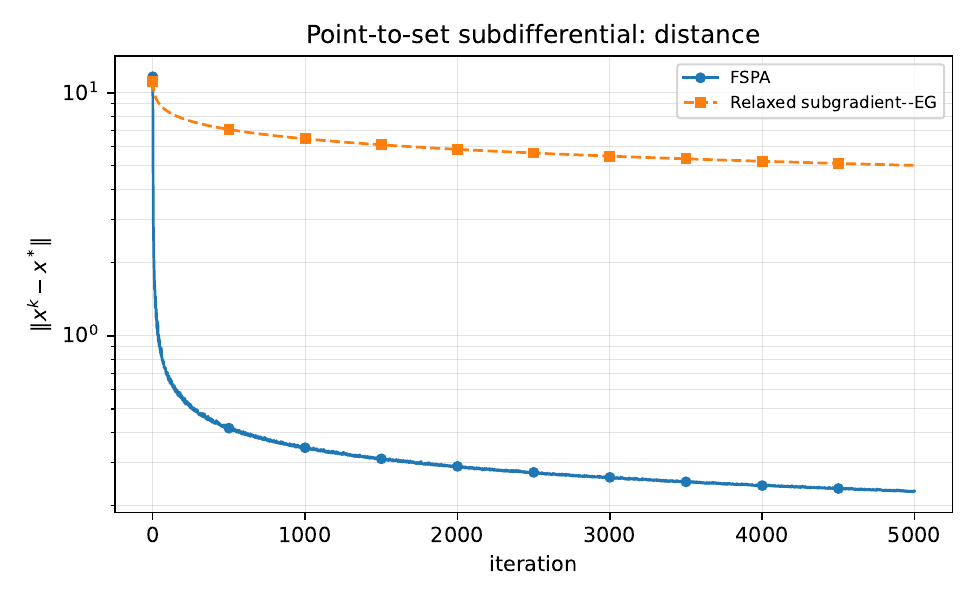}
\end{minipage}\hfill
\begin{minipage}{0.48\textwidth}
\centering
\includegraphics[width=\textwidth]{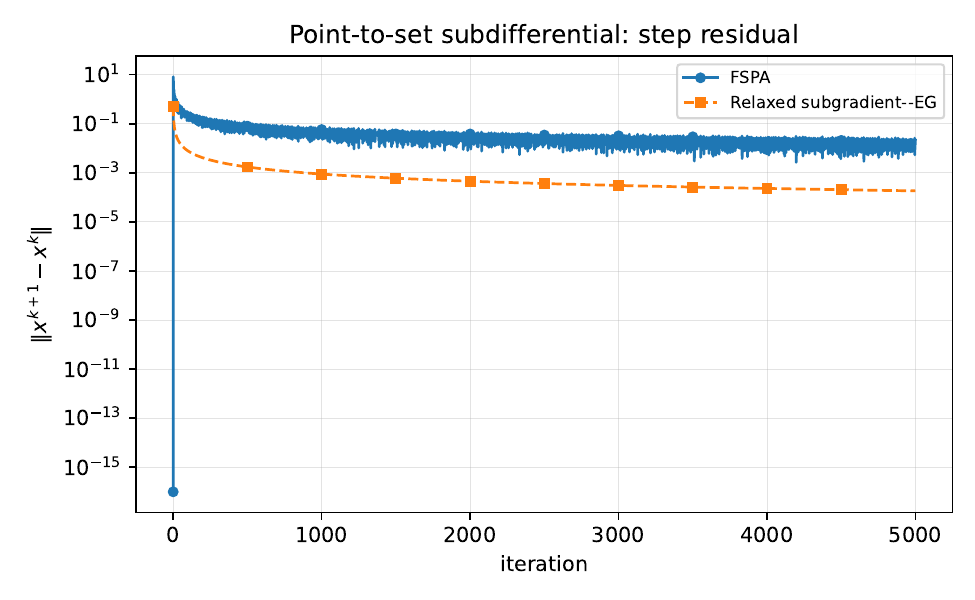}
\end{minipage}
\caption{Point-to-set \(\ell_1\)-subdifferential test.  The left panel reports
\(\norm{x^k-x^*}\) with \(x^*=0\); the right panel reports the step residual.
This example shows why a small step residual alone is not a reliable solution
quality measure for a multivalued nonsmooth operator.}
\label{fig:pointset}
\end{figure}

\FloatBarrier
\subsection{A step-size sensitivity check on the large max-quadratic problem}

To avoid attributing an artificial advantage to FSPA, we also
ran an iteration sweep on the large max-quadratic test.  \Cref{tab:maxquad-sweep}
compares the FSPA default parameters with the default relaxed baseline and
a retuned relaxed baseline.  Increasing the iteration budget alone does not
make the default relaxed baseline feasible, but a smaller and faster-decaying
step-size sequence makes the relaxed method competitive.  This confirms that
FSPA is not merely being compared against a badly implemented
baseline; rather, the relaxed method is sensitive to parameter choice, whereas
the FSPA update is robust with the tested default parameters.

\begin{table}[H]
\centering
\caption{Large max-quadratic iteration sweep.  The retuned relaxed baseline
uses \(\alpha_0=0.02\) and \(\beta=1.0\), while the default relaxed baseline
uses \(\alpha_0=0.15\) and \(\beta=0.25\).}
\label{tab:maxquad-sweep}
\small
\begin{tabular}{rllrr}
\toprule
\(K\) & Method & Parameters & Final step & Final \(c_+\) \\
\midrule
80 & FSPA & \(\alpha_0=1.0,\ \beta=0.75\) & \(6.32\times10^{-4}\) & 0 \\
80 & Relaxed default & \(\alpha_0=0.15,\ \beta=0.25\) & 0.252 & 0.271 \\
80 & Relaxed retuned & \(\alpha_0=0.02,\ \beta=1.0\) & \(1.52\times10^{-3}\) & 0 \\
\addlinespace
640 & FSPA & \(\alpha_0=1.0,\ \beta=0.75\) & \(4.32\times10^{-13}\) & 0 \\
640 & Relaxed default & \(\alpha_0=0.15,\ \beta=0.25\) & 0.180 & 0.191 \\
640 & Relaxed retuned & \(\alpha_0=0.02,\ \beta=1.0\) & \(1.79\times10^{-4}\) & 0 \\
\addlinespace
5120 & FSPA & \(\alpha_0=1.0,\ \beta=0.75\) & \(4.30\times10^{-13}\) & 0 \\
5120 & Relaxed default & \(\alpha_0=0.15,\ \beta=0.25\) & 0.116 & 0.125 \\
5120 & Relaxed retuned & \(\alpha_0=0.02,\ \beta=1.0\) & \(2.10\times10^{-5}\) & 0 \\
\bottomrule
\end{tabular}
\end{table}

\begin{figure}[H]
\centering
\begin{minipage}{0.48\textwidth}
\centering
\includegraphics[width=\textwidth]{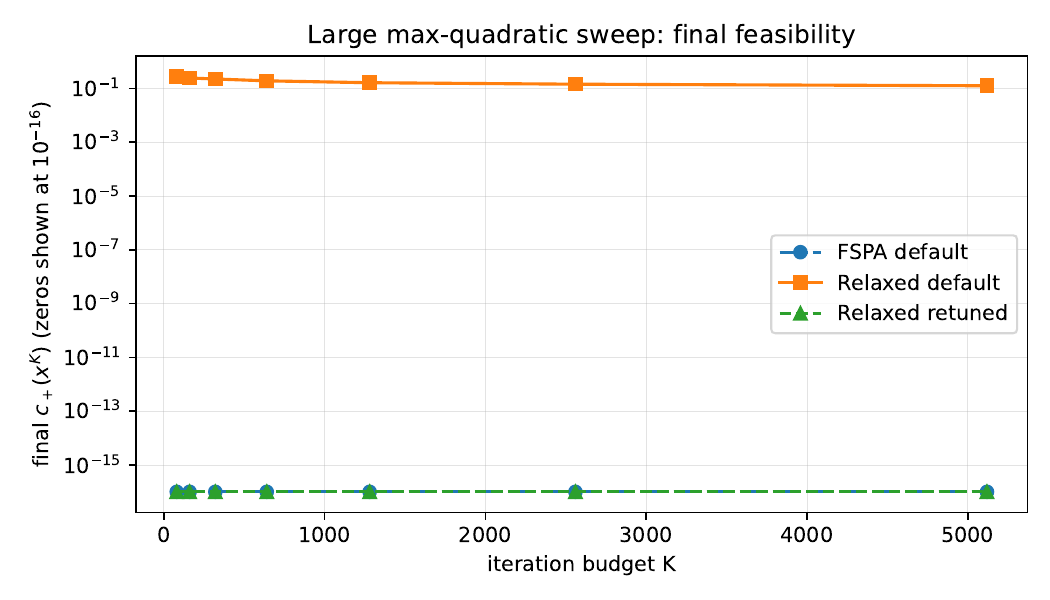}
\end{minipage}\hfill
\begin{minipage}{0.48\textwidth}
\centering
\includegraphics[width=\textwidth]{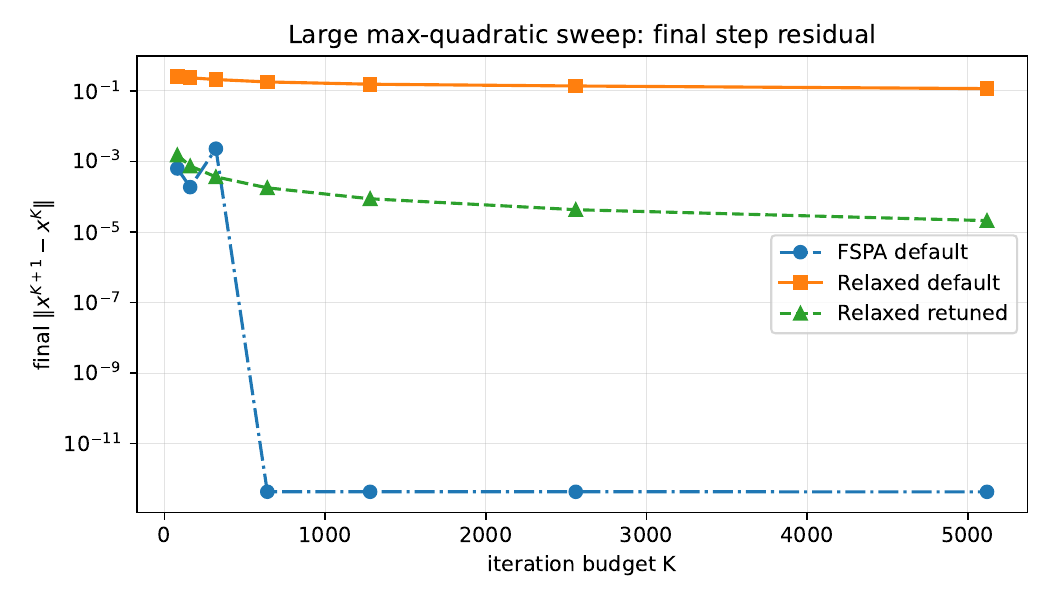}
\end{minipage}
\caption{Large max-quadratic iteration sweep.  The default relaxed baseline
remains infeasible even when the iteration budget is increased, whereas the
retuned relaxed method becomes feasible and converges more gradually.  Zero
feasibility values are displayed at \(10^{-16}\) only for the logarithmic
plot.}
\label{fig:maxquad-sweep}
\end{figure}

\FloatBarrier
\begin{remark}
The numerical results should be interpreted in light of the different costs of
the elementary operations.  Projection-based methods are very effective when
\(P_C\) is simple or when the projection problem is small enough to be solved
accurately by a generic constrained solver.  FSPA is designed
for the complementary situation: \(C\) is available through a nonsmooth convex
inequality and subgradients are cheap, but the metric projection onto \(C\) is a
difficult constrained optimisation problem.  The max-quadratic and Cournot
examples show this advantage most clearly.  The optimal-control and mixed-norm
examples give a more balanced picture: the relaxed baseline can be competitive
or even slightly better in residual, while FSPA retains the
projection-free-with-respect-to-\(C\) structure and the strong convergence
architecture proved above.
\end{remark}

\section{Conclusion}\label{sec:conclusion}

We proposed a Feasible Separation Projection Algorithm (FSPA) for monotone variational
inequalities over convex inequality constraints.  The key point is that the
operator separating half-space must be generated at an exactly feasible graph
point.  A Slater correction provides a simple way to convert infeasible trial
points into feasible operator-evaluation points without computing \(P_C\).  Under
the Slater-corrected Minty-separation oracle condition, the method uses only
projections onto intersections of half-spaces and converges strongly to
\(P_{S^*}(x^0)\).  The analysis also clarifies why approximate feasibility alone
is insufficient for exact separation.  The formulation is fully point-to-set:
strong convexity in the nonsmooth case is captured by strong monotonicity of the
maximal monotone operator, which yields uniqueness of the solution.  The
numerical tests support the intended use of the method: problems in which
subgradients of the defining inequality are cheap but the metric projection onto
\(C\) is expensive, as well as genuinely point-to-set nonsmooth inclusions where
distance-to-solution information is more informative than the final step
residual.  The numerical implementation tested here is a practical
single-candidate variant; certified numerical realisations of the full Minty
oracle remain an important direction for future work.  Future work should focus
on multiple-cut variants for max-type constraints,
adaptive candidate-set oracles with certified error bounds, and Tikhonov-type
tracking schemes.


\end{document}